\pdfoutput=1
\RequirePackage{ifpdf}
\ifpdf 
\documentclass[pdftex]{sigma}
\else
\documentclass{sigma}
\fi

\usepackage{tikz}
\usetikzlibrary{matrix,arrows}

\numberwithin{equation}{section}

\newtheorem{Theorem}{Theorem}[section]
\newtheorem{Corollary}[Theorem]{Corollary}
\newtheorem{Lemma}[Theorem]{Lemma}
\newtheorem{Proposition}[Theorem]{Proposition}
 { \theoremstyle{definition}
\newtheorem{Definition}[Theorem]{Definition}
\newtheorem{Remark}[Theorem]{Remark} }

\DeclareMathOperator{\Aut}{Aut}
\DeclareMathOperator{\Span}{Span}
\newcommand{\SZ}{\mathbb{Z}} 
\newcommand{\SR}{\mathbb{R}} 
\newcommand{\SC}{\mathbb{C}} 
\newcommand{\SH}{\mathbb{H}} 
\newcommand{\SO}{\mathbb{O}} 

\begin{document}

\allowdisplaybreaks

\newcommand{\arXivNumber}{1811.03613}

\renewcommand{\PaperNumber}{078}

\FirstPageHeading

\ShortArticleName{The Transition Function of $G_2$ over $S^6$}

\ArticleName{The Transition Function of $\boldsymbol{G_2}$ over $\boldsymbol{S^6}$}

\Author{\'Ad\'am GYENGE}

\AuthorNameForHeading{\'A.~Gyenge}

\Address{Mathematical Institute, University of Oxford, UK}
\Email{\href{mailto:Adam.Gyenge@maths.ox.ac.uk}{Adam.Gyenge@maths.ox.ac.uk}}

\ArticleDates{Received May 23, 2019, in final form September 26, 2019; Published online October 09, 2019}

\Abstract{We obtain explicit formulas for the trivialization functions of the ${\rm SU}(3)$ principal bundle $G_2 \to S^6$ over two affine charts. We also calculate the explicit transition function of this fibration over the equator of the six-sphere. In this way we obtain a new proof of the known fact that this fibration corresponds to a generator of $\pi_{5}({\rm SU}(3))$.}

\Keywords{$G_2$; six-sphere; octonions; fibration; transition function}

\Classification{57S15; 55R10; 55R25}

\section{Introduction}

The well-known classification of simple Lie groups shows that $G_2$ is the smallest among the exceptional types. Further interesting properties and applications of it are numerous. In this paper we revisit the compact real form of~$G_2$ from the viewpoint of differential geometry.

We identify $G_2$ with $ \Aut \SO \subset {\rm SO}(7)$, the automorphism group of the Cayley octonions. It is a classical fact that there is a fibration $p\colon G_2 \rightarrow S^6$, which makes $G_2$ a locally trivial ${\rm SU}(3)$-bundle over $S^6$. It is also known that the principal ${\rm SU}(3)$-bundles over $S^6$ are classified by $\pi_5({\rm SU}(3))=\SZ $.

A natural question is that to which element in $\pi_5({\rm SU}(3))=\SZ $ does the fibration $G_2 \rightarrow S^6$ correspond? In other words, what is the homotopy class of the transition function $S^5 \rightarrow {\rm SU}(3)$ of the above fibration, where $S^5 \subset S^6$ is the equator of the six-sphere?
\begin{Theorem}[{\cite[Proposition~2]{chaves96complex}}]\label{thm:transfun}
 The class of the transition function of the fibration $G_2 \rightarrow S^6$ is a generator of $\pi_{5}({\rm SU}(3))$.
\end{Theorem}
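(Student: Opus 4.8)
The plan is to realize the bundle $p\colon G_2 \to S^6$ concretely through the action of $G_2 = \Aut\mathbb{O}$ on the unit sphere of the imaginary octonions $\mathrm{Im}\,\mathbb{O} \cong \mathbb{R}^7$, to construct explicit local sections over two charts, to read off the transition function on the equatorial $S^5$, and finally to detect its class in $\pi_5(\mathrm{SU}(3)) \cong \mathbb{Z}$ by a single cohomological integral. First I would fix an octonion multiplication table and a basepoint $u_0 \in S^6$ (a unit imaginary octonion), so that $p(g) = g(u_0)$ and the fiber $p^{-1}(u_0)$ is the stabilizer of $u_0$, identified with $\mathrm{SU}(3)$ via the complex structure that left multiplication by $u_0$ induces on the $6$-dimensional space $u_0^\perp \subset \mathrm{Im}\,\mathbb{O}$, turning it into $\mathbb{C}^3$.

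Next I would build two sections. Covering $S^6$ by the complements $U_\pm$ of the poles $\pm u_0$, I would assign to each $u \in U_\pm$ a canonical automorphism $\sigma_\pm(u) \in G_2$ carrying $u_0$ to $u$, constructed from the rotation taking $u_0$ to $u$ in their common plane and extended compatibly with the cross product, normalized to depend smoothly on $u$ away from the opposite pole. The overlap $U_+ \cap U_-$ retracts onto the equator $S^5$, and restriction there yields the transition function $t = \sigma_+^{-1}\sigma_- \colon S^5 \to \mathrm{SU}(3)$ as an explicit, octonion- (hence matrix-) valued expression.

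To pin down its homotopy class I would use that $\mathrm{SU}(3)$ is $2$-connected with $\pi_3 = \pi_5 = \mathbb{Z}$, so by Hurewicz the generator of $\pi_5$ maps to a generator of $H_5(\mathrm{SU}(3);\mathbb{Z}) = \mathbb{Z}$; dually, the degree-$5$ integral cohomology is generated by the bi-invariant form $\omega = c\,\mathrm{tr}(\theta^{\wedge 5})$, with $\theta$ the Maurer--Cartan form and $c$ the constant making $\omega$ integral. The class of any $f\colon S^5 \to \mathrm{SU}(3)$ then equals $\int_{S^5} f^*\omega$, and the theorem reduces to evaluating this for $f = t$ and checking that the value is $\pm 1$.

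The main obstacle will be the octonionic bookkeeping: producing $\sigma_\pm$ in closed form, verifying that they genuinely land in $G_2$ and that $t$ takes values in the $\mathrm{SU}(3)$ fiber, and then pushing the Maurer--Cartan integral through to the integer $\pm 1$. As a consistency check, and an alternative route that avoids the integral, one can feed the fibration into the long exact homotopy sequence: since $\pi_5(S^6) = 0$ and (as is known) $\pi_5(G_2) = 0$, the connecting map $\partial\colon \pi_6(S^6) = \mathbb{Z} \to \pi_5(\mathrm{SU}(3)) = \mathbb{Z}$ is surjective, hence an isomorphism, and $\partial$ of the fundamental class is precisely the clutching class of the bundle; this forces the transition function to represent a generator.
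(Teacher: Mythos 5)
The detection step in your main route is wrong. Since $\pi_3({\rm SU}(3))=\SZ\neq 0$, the group ${\rm SU}(3)$ is only $2$-connected, so the Hurewicz theorem gives nothing in degree $5$, and your claim that a generator of $\pi_5({\rm SU}(3))$ maps to a generator of $H_5({\rm SU}(3);\SZ)$ is false. By Bott's theorem a generator of $\pi_{2n-1}({\rm SU}(n))$ clutches a bundle over $S^{2n}$ with top Chern number $(n-1)!$; equivalently, the Hurewicz homomorphism $\pi_5({\rm SU}(3))\to H_5({\rm SU}(3);\SZ)$ is injective with image of index $2!=2$. Hence if $\omega$ is normalized to generate $H^5({\rm SU}(3);\SZ)$ (this is exactly where ``the constant making $\omega$ integral'' hides the trap), then $\int_{S^5}f^*\omega=\pm 2$ for a generator $f$, and your criterion ``the value is $\pm 1$'' can never be met: carrying the Maurer--Cartan integral through for the actual transition function would yield $\pm 2$, and under your normalization you would conclude, wrongly, that the class is \emph{twice} a generator. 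This factor of $2$ is precisely what the paper's proof is organized around: it fibers ${\rm SU}(3)$ over $S^5$ by projection onto the first column (fiber ${\rm SU}(2)$), reads off from the exact sequence $\pi_5({\rm SU}(3))\xrightarrow{\pi_*}\pi_5\big(S^5\big)\to\pi_4({\rm SU}(2))=\SZ_2\to\pi_4({\rm SU}(3))=0$ that $\pi_*$ is multiplication by $2$, and then checks directly that the first column of the explicit $\theta$ of Theorem~\ref{thm:trans}, namely $\big(u^2,uv-\overline{w},uw+\overline{v}\big)$, has degree $2$ as a self-map of $S^5$ (the two preimages of $(1,0,0)$ count with equal signs), whence $\pi_*([\theta])=2$ and $[\theta]$ generates.

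Your fallback argument, on the other hand, is correct and genuinely different from the paper's: with $\pi_5(G_2)=0$, exactness forces $\partial\colon\pi_6\big(S^6\big)=\SZ\to\pi_5({\rm SU}(3))=\SZ$ to be surjective, hence an isomorphism, and $\partial[{\rm id}_{S^6}]$ is (up to sign) the clutching class. Two caveats. First, you must quote $\pi_5(G_2)=0$ non-circularly, since many sources derive it from this very fibration together with the statement being proved; an independent route is ${\rm Spin}(7)/G_2=S^7$, whose exact sequence gives $\pi_5(G_2)\cong\pi_5({\rm Spin}(7))=\pi_5({\rm SO}(7))=0$, the last group being in the stable range and vanishing by Bott periodicity. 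Second, this route proves only the homotopy statement of Theorem~\ref{thm:transfun} while making your construction of sections superfluous, whereas the paper's point is the closed formula for $\theta$; relatedly, your sections built from ``the rotation in the common plane extended compatibly with the cross product'' would still need a proof that they lie in $G_2$ at all --- the paper secures this by using inner automorphisms $x\mapsto r_\xi x\bar{r}_\xi$ with $r_\xi=\tfrac{1}{2}(1+i)(1+\xi)$, which are legitimate automorphisms precisely because of the constraint $4r_1^2=|r|^2$ of Theorem~\ref{thm:reoctin}.
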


Let $U_1 = S^6{\setminus}\{ S \}$ and $U_2 = S^6{\setminus}\{ N \}$ be two affine charts on $S^6$, where $S$ and $N$ are the south and north poles. Here we consider $S^6$ as the unit six-sphere
\[ S^6=\bigg\{(0,x_2,\dots,x_8) \in \mathbb{O}\colon \sum_i|x_i|^2=1 \bigg\}\]
in the 7-dimensional vector space of purely imaginary octonions, such that $S=(0,-1,0,\dots,0)$ and $N=(0,1,0,\dots,0)$. Our first result is
an explicit formula for the trivialization functions
\[ \psi_1 \colon \ p^{-1}(U_1) \rightarrow U_1 \times {\rm SU}(3) \]
and
\[ \psi_2 \colon \ p^{-1}(U_2) \rightarrow U_2 \times {\rm SU}(3) \]
deduced in Propositions~\ref{prop:trivn} and \ref{prop:trivs} below.

Using the octonion multiplication the linear subspace $\{x_1=x_2=0\}$ of $\mathbb{O}$ can naturally be identified with $\mathbb{C}^3$. For the precise description of this identification see Section~\ref{section3.2} below. Let $S^5 \subset S^6$ be the equator. This is a~(real) submanifold of $\mathbb{C}^3$ given as
\[ S^5=\big\{(u,v,w) \in \mathbb{C}^3 \colon |u|^2+|v|^2+|w|^2=1 \big\}.\]
The transition function of the ${\rm SU}(3)$-bundle $G_2 \rightarrow S^6$ between the two affine charts of $S^6$ is the ``gluing map'' $t_{12}\colon U_1 \cap U_2 \to {\rm SU}(3)$ for which
\[ \psi_1 \circ \psi_2^{-1}(\xi,\phi)=(\xi, t_{12}(\xi)\phi),\]
where $\xi \in U_1 \cap U_2$ and $\phi \in {\rm SU}(3)$.
Let $\theta:=t_{12}|_{S^5}$ be the restriction of the transition function to the equator. Our second main result is an \emph{explicit} formula for this mapping.
\begin{Theorem} \label{thm:trans}
 The explicit formula of the transition function of the fibration $G_2 \rightarrow S^6$ between the two charts
 \[S^6= ( U_1 ) \cup ( U_2 ) \]
 over the equator $S^5 \subset S^6$ is
 \begin{gather*}
 \theta\colon \ S^5 \rightarrow {\rm SU}(3), \qquad
 \begin{pmatrix}
 u\\ v\\ w
 \end{pmatrix}\mapsto
 \begin{pmatrix}
 u^2 & vu +\overline{w} &wu-\overline{v}\\
 uv-\overline{w} & v^2 & wv+\overline{u} \\
 uw+\overline{v}& vw-\overline{u} & w^2
 \end{pmatrix}.
 \end{gather*}
\end{Theorem}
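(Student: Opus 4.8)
The plan is to read the transition function off directly from the two explicit trivializations rather than to construct it abstractly. Recall the general principle for a principal ${\rm SU}(3)$-bundle: if $s_1,s_2$ denote the local sections underlying the trivializations $\psi_1,\psi_2$ of Propositions~\ref{prop:trivn} and \ref{prop:trivs}, then $\psi_2^{-1}(\xi,\mathrm{id})=s_2(\xi)$, and applying $\psi_1$ gives $t_{12}(\xi)=s_1(\xi)^{-1}s_2(\xi)$, regarded as an element of the stabilizer $\mathrm{Stab}(e_1)={\rm SU}(3)$ acting $\mathbb{C}$-linearly on $\mathbb{C}^3=\{x_1=x_2=0\}$. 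Thus the entire problem reduces to composing the two sections and restricting to the equator $S^5=\{x_2=0\}\cap S^6\subset\mathbb{C}^3$.

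First I would substitute the explicit section formulas and simplify $s_1(\xi)^{-1}s_2(\xi)$ on the equator. Since $s_1(\xi)$ and $s_2(\xi)$ are both automorphisms of $\mathbb{O}$ sending $e_1$ to $\xi$, their composite fixes $e_1$, hence preserves $\Span(1,e_1)$ and its orthogonal complement $\mathbb{C}^3$; because it moreover commutes with left multiplication by $e_1$ — the complex structure on $\mathbb{C}^3$ — it is $\mathbb{C}$-linear. Verifying this $\mathbb{C}$-linearity carefully is the cleanest way to guarantee that the output is a genuine $3\times 3$ complex matrix and not merely a real-linear map of $\mathbb{R}^6$.

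Next I would compute the action of $t_{12}(\xi)$ on a chosen complex basis $f_1,f_2,f_3$ of $\mathbb{C}^3$. Using the identification of Section~\ref{section3.2} and the octonion multiplication table, I would expand each $t_{12}(\xi)(f_j)$ into its components along $f_1,f_2,f_3$. The coordinates $(u,v,w)$ of $\xi$ should enter through products of the form $\xi\cdot f_j$ and $\xi\cdot(\xi\cdot f_j)$: the latter, quadratic in $\xi$, produces the diagonal entries $u^2,v^2,w^2$ together with the bilinear terms $vu,\ wu,\ uv,\dots$, while the conjugate terms $\overline{w},\overline{v},\overline{u},\dots$ arise from the real-structure (conjugation) parts forced by the non-commutativity of octonion multiplication. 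Collecting coefficients is intended to yield exactly the matrix in the statement.

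The main obstacle is the octonion bookkeeping: keeping the signs and the pairing of basis vectors consistent so that the six real products assemble into the stated holomorphic-plus-antiholomorphic entries, while simultaneously confirming the $\mathbb{C}$-linearity. As a final consistency check — and an independent guard against a stray sign — I would verify directly that the matrix lies in ${\rm SU}(3)$: using $|u|^2+|v|^2+|w|^2=1$, each column has unit norm (for the first, the squared norm is $|u|^2\bigl(|u|^2+|v|^2+|w|^2\bigr)+|v|^2+|w|^2=1$) and the columns are mutually orthogonal, and the determinant equals~$1$. This membership is in any case automatic from the construction, since $t_{12}$ takes values in the structure group by definition, so the check serves only to validate the explicit entries.
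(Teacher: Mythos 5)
Your reduction $t_{12}(\xi)=s_1(\xi)^{-1}s_2(\xi)$ is the right general principle, but you apply it to trivializations that are not of the form you assume, and this is where the argument breaks. The south-chart trivialization $\psi_2$ of Proposition~\ref{prop:trivs} is \emph{not} induced by a section through $i$ (your $e_1$ is the paper's $i$): the translating automorphism there is $\tilde Q_\xi=Q_{-\xi}$, which satisfies $\tilde Q_\xi(-i)=\xi$, hence $\tilde Q_\xi(i)=-\xi$, and the reference fiber $V_{-i}$ carries the \emph{conjugate} complex structure $J_{-i}(v)=-iv$. Consequently the composite one actually has to compute, $Q_{-\xi}^{-1}\circ Q_\xi$ (the paper's choice; your $s_1^{-1}s_2=Q_\xi^{-1}\circ Q_{-\xi}$ is its inverse), sends $i$ to $-i$: it does not fix $i$, so it is not an element of the stabilizer ${\rm SU}(3)$, and it \emph{anti}-commutes with left multiplication by $i$, since $Q_{-\xi}^{-1}Q_\xi(iv)=\bigl(Q_{-\xi}^{-1}Q_\xi(i)\bigr)\bigl(Q_{-\xi}^{-1}Q_\xi(v)\bigr)=-i\cdot Q_{-\xi}^{-1}Q_\xi(v)$. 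Your step ``because it commutes with left multiplication by $e_1$ it is $\mathbb{C}$-linear'' therefore fails: the composite is $J_i$-antilinear, i.e., complex linear only as a map $(V_i,J_i)\to(V_{-i},J_{-i})$, and passing to an honest ${\rm SU}(3)$ matrix requires the extra coordinate conjugation the paper performs at the end of the proof of Theorem~\ref{thm:traneq} (turning $M_{\overline\xi}$ into $M_\xi$). That conjugation is precisely what produces the antiholomorphic entries $\overline{u},\overline{v},\overline{w}$, and your outline has no mechanism for it.

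A second, related problem: the explicit matrix in the statement is not intrinsic to the bundle but depends on the specific $\psi_1,\psi_2$; changing trivializations alters $t_{12}$ pointwise by maps $U_i\to{\rm SU}(3)$ (though not its homotopy class, the charts being contractible). If you replace $\psi_2$ by a trivialization genuinely induced by a section $s_2$ with $s_2(\xi)(i)=\xi$ --- such a section exists abstractly, but the paper supplies no formula for one, since the explicit solution~\eqref{eq:octhopfsol} degenerates at $x_2=-1$ and lives only on $U_1$ --- you would be proving a different statement, correct only up to gauge equivalence and homotopy, not the exact formula claimed; so there is nothing to ``substitute'' for your $s_2$. Finally, your plan gives no concrete route to ``collecting coefficients'': the paper needs the closed forms of Lemma~\ref{lem:gxiv} for $Q_\xi(v)$ and of Lemma~\ref{lemma3.7}, namely $Q_{-\xi}^{-1}\circ Q_\xi(v)=v\xi-\langle v\xi,1\rangle(1+\xi)-\langle v\xi,i\rangle(1+\xi)i$, each resting on a battery of octonion identities from Lemma~\ref{lem:octprop}, before evaluating on the basis $j$, $e$, $g$. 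Asserting that the terms $\xi\cdot f_j$ and $\xi\cdot(\xi\cdot f_j)$ ``should'' assemble into the stated matrix is where the actual work lies, and it is not supplied; your final ${\rm SU}(3)$-membership check is sound but, as you note, validates nothing about the specific entries.
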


Our result strengthens \cite[Corollary~3]{chaves96complex}, where this transition function was obtained \emph{up to homotopy}. In particular, our results give a new proof for Theorem~\ref{thm:transfun}. We expect that having an explicit formula for the transition function can be useful in several applications. These may include for example the calculation of Gromov-Witten invariants of $G_2$ and/or $S^6$ as well as calculations about the classifying stack $BG_2$ (see for example \cite[Section~3.2]{pirisi2017motivic}).

It was noted in \cite[Section~2]{puttmann2003presentations} that the map $\theta$ can also be written as
\begin{gather*}
\theta\colon \ S^5 \rightarrow {\rm SU}(3), \qquad z\mapsto zz^t+\overline{M_z},
\end{gather*}
where $z=(u,v,w)^t$ and
\[M_z=
\begin{pmatrix}
0 & w &-v\\
-w & 0 & u \\
v & -u & 0
\end{pmatrix}.
\]
It is standard that $M_z$ is the complexification of the usual cross product on the Euclidean 3-space or, equivalently, of the Spin 1 representation of $\mathfrak{su}(2) \cong \mathbb{R}^3$. A straightforward computation reveals that
\[ zz^tM_z=M_zzz^t=0.\]
This shows in particular that the matrices $zz^t$ and $\overline{M_z}$ are orthogonal with respect to the Frobenius inner product
\[ (A,B) \mapsto \operatorname{Tr}\big(\overline{A^t}B\big) \]
of complex matrices.

The structure of the paper is as follows. In Section~\ref{section2} we give a brief introduction to the algebra of Cayley octonions and to several known facts about the group $G_2$. The new results of the paper are obtained in Section~\ref{section3}.

\section[Some known facts about $G_2$]{Some known facts about $\boldsymbol{G_2}$}\label{section2}

\subsection{Cayley octonions}\label{section2.1}

To perform calculations in the group $G_2$ we collect some known facts about the Cayley algebra of octonions. We follow \cite{postnikov1986lie} where detailed proofs of the following results are given.

Let $\mathcal{A}$ be an algebra over the reals. A linear mapping $a \mapsto \bar{a}$ of $\mathcal{A}$ to itself is said to be a~\textit{conjugation} or \textit{involutory antiautomorphism} if $\bar{\bar{a}}=a$ and $\overline{ab}=\bar{b}\bar{a}$ for any elements $a,b \in A$ (the case $\bar{a}=a$ is not excluded).
\begin{Definition}[Cayley--Dickson construction \cite{baez2002octonions, postnikov1986lie}] Consider the vector space of the direct sum of two copies of an algebra with conjugation: $\mathcal{A}^2=\mathcal{A} \oplus \mathcal{A}$. A multiplication on $\mathcal{A}^2$ is defined as
 \[(a,b)(u,v)=(au-\bar{v}b,b\bar{u}+va).\]
 It is easy to check, that relative to this multiplication the vector space $\mathcal{A}^2$ is an algebra of dimension $2 \cdot \mathrm{dim}(\mathcal{A})$. This is called the \textit{doubling} of the algebra $\mathcal{A}$.
\end{Definition}
\begin{Remark} The correspondence $a \mapsto (a,0)$ is a monomorphism of $\mathcal{A}$ into $\mathcal{A}^2$. Therefore we will identify elements $a$ and $(a,0)$ and thus assume $\mathcal{A}$ is a subalgebra of $\mathcal{A}^2$. If $\mathcal{A}$ has an identity element, then the element $1=(1,0)$ is obviously an identity element in $\mathcal{A}^2$.
\end{Remark}
A distinguished element in $\mathcal{A}^2$ is $e=(0,1)$. It follows from the definition of multiplication that $be=(0,b)$ and hence $(a,b)=a+be$ for all $a,b\in \mathcal{A}$. Thus every element of the algebra $\mathcal{A}^2$ is uniquely written as $a+be$. Moreover, the following identities are true:
\begin{gather}a(be)=(ba)e, \qquad (ae)b=(a\bar{b})e, \qquad (ae)(be)=-\bar{b}a.\label{eq:doublingid}
\end{gather}
In particular $e^2=-1$.

To iterate the Cayley--Dickson construction it is necessary to define a conjugation in $\mathcal{A}^2$. This will be done by the formula
\[\overline{a+be}=\bar{a}-be.\]
This is involutory, $\SR$-linear and is simultaneously an antiautomorphism. It is straightforward to check that if $\mathcal{A}$ is a metric algebra, then $(a+be)\overline{(a+be)} \in \SR$ and it is obviously positive if~$a$ or~$b$ is not~0. Hence, in this case $\mathcal{A}^2$ is also metric algebra.

The doubling $\SR^2$ of the field $\SR$ is the algebra $\SC$ of complex numbers and the doubling~$\SC^2$ of~$\SC$ is the algebra of quaternions $\SH$. In the latter case~$e$ is denoted by $j$ and $ie$ is denoted by~$k$, and thus a general quaternion is of the form $r=r_1+r_2i+r_3j+r_4k$, where $r_i \in \SR$, $i=1,2,3,4$. Due to the second identity of~\eqref{eq:doublingid}, $ea=\bar{a}e$ for all $a \in \mathcal{A}$. Therefore, $\mathcal{A}^2$ is not commutative if the original conjugation is not the identity mapping. In particular $\SH$ is not commutative, as it is well known.

The doubling of the algebra of quaternions leads to an 8-dimensional algebra over the reals.
\begin{Definition} The algebra $\SO=\SH^2$ is the \textit{Cayley algebra}, and its elements are called \textit{octonions} or \textit{Cayley numbers}.
\end{Definition}
By definition every octonion is of the form $\xi=a+be$, where $a$ and $b$ are quaternions. The basis of $\SO$ consists of $1$ and seven elements
\[i,\ j,\ k,\ e,\ f=ie,\ g=je,\ h=ke .\]
The square of each of these elements is $-1$, and they are orthogonal to~$1$. To avoid abusive use of parentheses, both juxtaposition and dots will be used to denote multiplication in $\SO$. The next lemma gives a list of important properties and identities in $\SO$ which we will use to prove our results.

\begin{Lemma}[{\cite{postnikov1986lie}}]\label{lem:octprop}\quad
 \begin{enumerate}\itemsep=0pt
\item[$1.$] The algebra $\SO$ is alternative. That is,
 \[ (a b ) b=a (b b),\qquad a(a b)=(a a)b. \]
\item[$2.$] 
The identity of elasticity $($or flexibility$)$ holds in $\SO$:
 \[ (ab)a=a(ba). \]
\item[$3.$] The algebra $\SO$ is a normed algebra with the norm generated by the metric. In particular, it is a division algebra.
\item[$4.$] 
For all $a,x,y \in \SO$ \[ax\cdot\overline{y}+ay\cdot\overline{x}=2 \langle x,y \rangle a.\]
\item[$5.$] 
For all $a,x,y \in \SO$ \[ax\cdot y+ay\cdot x=a\cdot xy+a\cdot yx. \]
\item[$6.$] 
For all $a,b,x,y \in \SO$ \[ \langle ax,by\rangle+\langle bx,ay\rangle=2\langle a,b\rangle \langle x,y\rangle. \]
\item[$7.$] 
The Moufang identity holds in $\SO$: \[ a (bc) a= (ab)(ca). \]
 \end{enumerate}
\end{Lemma}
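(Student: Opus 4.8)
The plan is to derive all seven identities, in a fixed order, from a single analytic input---multiplicativity of the octonion norm---together with the Cayley--Dickson conjugation and the doubling relations~\eqref{eq:doublingid}. Throughout I write $N(x)=\langle x,x\rangle$ and $t(x)=\langle x,1\rangle$. I would begin by recording the elementary consequences of $\overline{a+be}=\bar a-be$: namely $x+\bar x=2t(x)\,1$ and $x\bar x=\bar x x=N(x)\,1$, so that every octonion satisfies the quadratic relation $x^2-2t(x)x+N(x)\,1=0$, while polarizing $x\bar x=N(x)$ gives $x\bar y+y\bar x=2\langle x,y\rangle 1$. These serve as the bookkeeping tools everywhere below.

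First I would settle property~3. The assertion that $\SO$ is a normed algebra is the statement $N(xy)=N(x)N(y)$, which I would prove by induction up the tower $\SR\subset\SC\subset\SH\subset\SO$ directly from the Cayley--Dickson product; the inductive step relies on \emph{associativity} of the preceding algebra, and this is exactly why multiplicativity reaches $\SO$ but fails one doubling later. Since the norm is positive definite, multiplicativity at once forbids zero divisors and supplies $x^{-1}=\bar x/N(x)$, giving the division-algebra claim.

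The bilinear identities~4--6 are then pure polarization. Polarizing $N(ax)=N(a)N(x)$ separately in $a$ and in $x$ produces property~6, $\langle ax,by\rangle+\langle ay,bx\rangle=2\langle a,b\rangle\langle x,y\rangle$; specializing $y=1$ and using $\bar y=2t(y)\,1-y$ yields the adjunction rules $\langle xy,z\rangle=\langle x,z\bar y\rangle=\langle y,\bar x z\rangle$. Property~4 then follows by pairing $(ax)\bar y+(ay)\bar x$ against an arbitrary $z$ and invoking property~6. For the alternative laws~1 I would extract $(xy)\bar y=N(y)x$ from the same polarization, substitute $\bar y=2t(y)\,1-y$, and compare with $x(y^2)$ using the quadratic relation; flexibility~2, being the two-generated case, is then immediate from alternativity by Artin's theorem, and property~5 is exactly the linearization of the right alternative law $(ab)b=a(bb)$ in the variable $b$.

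The remaining and hardest piece is the Moufang identity~7, $a(bc)a=(ab)(ca)$, which does not issue from a single polarization. I would derive it from alternativity through the associator $[a,b,c]=(ab)c-a(bc)$, using that it is an alternating trilinear map and expanding both sides with the help of the linearized alternative laws. I expect this associator calculus, rather than any of the earlier polarizations, to be the main obstacle---with the careful inductive proof of norm multiplicativity a close second, since it is where the special role of associativity of $\SH$ must be pinned down.
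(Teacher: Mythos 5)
The paper offers no proof of this lemma at all---it is quoted from Postnikov's book \cite{postnikov1986lie} with the proofs explicitly deferred to that reference---so there is no internal argument to compare against; your outline is precisely the standard composition-algebra development found there (and in Baez or Schafer): norm multiplicativity by induction along the Cayley--Dickson tower, where associativity of $\SH$ is exactly the condition making the cross terms $\langle au,\bar v b\rangle$ and $\langle b\bar u, va\rangle$ cancel, then items 4--6 by polarizing $N(ax)=N(a)N(x)$ together with the adjunction rules $\langle ax,y\rangle=\langle x,\bar a y\rangle$, alternativity from $(xy)\bar y=N(y)x$ plus the quadratic relation, flexibility and item 5 as linearizations via the alternating associator, and Moufang from associator calculus. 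Every step you sketch is sound and the logical order (3 before 1, 2, 4--6, with 7 last) is workable; the single point demanding real care is the one you flag yourself, the Moufang identity, which is a genuinely multi-step associator computation in an alternative algebra rather than a one-line polarization, so a complete write-up would need to execute that classical but nontrivial calculation in full.
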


\subsection[$G_2$ and the subgroup ${\rm SU}(3)$]{$\boldsymbol{G_2}$ and the subgroup $\boldsymbol{{\rm SU}(3)}$}\label{section2.2}

The group $G_2$ is defined as the automorphism group $\Aut \SO $ of the octonions. It follows from standard facts on unital normed algebras that $ G_2 \subset O(7)$.

Let $\SO' \subset \SO$ be the 7-dimensional subspace of purely imaginary octonions. Consider the subset of the vector space $\SO'$ consisting of elements~$\xi$, such that $|\xi|=1$. This set is a 6-dimensional sphere, which is denoted by~$S^6$. An automorphism $\Phi \colon \SO \rightarrow \SO$ sends the elements $i$, $j$ and~$e$ to elements $\xi=\Phi i$, $\eta= \Phi j$ and $\zeta = \Phi e$ in $S^6$ such that $\eta$ is orthogonal to $\xi$ and $\zeta$ is orthogonal to $\xi$, $\eta$ and $\xi \eta$. The next theorem shows, that these conditions are not only necessary but also sufficient for the existence of the automorphism $\Phi$.

The statement of the following theorem is classical.
\begin{Theorem}[{\cite[p.~309]{postnikov1986lie}}] \label{thm:autcorr}
 For any elements $\xi, \eta, \zeta \in S^6$ such that
 \begin{enumerate}\itemsep=0pt
 \item[$(a)$] $\eta$ is orthogonal to $\xi$,
 \item[$(b)$] $\zeta$ is orthogonal to $\xi$, $\eta$ and $\xi \eta$
 \end{enumerate}
 there is a unique automorphism $\Phi\colon \SO \rightarrow \SO$ for which \[ \xi=\Phi i,\qquad \eta= \Phi j,\qquad \zeta = \Phi e.\]
\end{Theorem}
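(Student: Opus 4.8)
The plan is to prove uniqueness by observing that an automorphism is pinned down by its values on the multiplicative generators $i$, $j$, $e$, and to prove existence by checking that the images of the standard basis under the only possible candidate reproduce the octonion multiplication, organised as an identification of two Cayley--Dickson presentations of $\SO$. For uniqueness, an automorphism $\Phi$ fixes $1$ and is multiplicative, so from $\Phi i=\xi$, $\Phi j=\eta$, $\Phi e=\zeta$ the remaining basis values are forced: $\Phi k=\Phi(ij)=\xi\eta$, $\Phi f=\Phi(ie)=\xi\zeta$, $\Phi g=\Phi(je)=\eta\zeta$ and $\Phi h=\Phi(ke)=(\xi\eta)\zeta$. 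Since $1,i,j,k,e,f,g,h$ span $\SO$ and $\Phi$ is $\SR$-linear, $\Phi$ is determined. For existence I define $\Phi$ on the basis by exactly these formulas and extend linearly; the task is then to show $\Phi(xy)=\Phi(x)\Phi(y)$ for all $x,y$.

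First I would establish that the eight images $1,\xi,\eta,\xi\eta,\zeta,\xi\zeta,\eta\zeta,(\xi\eta)\zeta$ form an orthonormal basis mirroring the standard one. Multiplication by a unit octonion preserves the norm by Lemma~\ref{lem:octprop}(3), hence is orthogonal, and from Lemma~\ref{lem:octprop}(6) (taking one argument equal to $1$) one obtains the adjunction formula $\langle ax,y\rangle=\langle x,\bar a y\rangle$. Using these, orthogonality of $\xi$, $\eta$ forces $\xi\eta$ to be a unit imaginary octonion orthogonal to both, so $\SH':=\Span\{1,\xi,\eta,\xi\eta\}$ is a copy of $\SH$ under $\xi\mapsto i$, $\eta\mapsto j$, $\xi\eta\mapsto k$; here I use flexibility and alternativity (Lemma~\ref{lem:octprop}(1),(2)) to verify relations such as $\xi(\xi\eta)=-\eta$ and $(\xi\eta)\xi=\eta$. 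The adjunction then gives $\langle a\zeta,b\rangle=\langle\zeta,\bar a b\rangle=0$ for $a,b\in\SH'$, so $\SH'\zeta\perp\SH'$ and, by a dimension count, $\SO=\SH'\oplus\SH'\zeta$.

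The decisive step, and the main obstacle, is to prove that $\SO$ is the Cayley--Dickson doubling of $\SH'$ by $\zeta$, i.e., that the identities \eqref{eq:doublingid} hold with $\zeta$ in place of $e$: for all $a,b\in\SH'$,
\[a(b\zeta)=(ba)\zeta,\qquad (a\zeta)b=(a\bar b)\zeta,\qquad (a\zeta)(b\zeta)=-\bar b a.\]
The difficulty is that $\SO$ is non-associative, so these cannot be obtained by freely rearranging factors. I would first record the anticommutation $\zeta a=\bar a\zeta$, which is immediate on the imaginary generators since $\zeta$ is orthogonal to each of $\xi,\eta,\xi\eta$ and orthogonal imaginary octonions anticommute, and then derive the three displayed identities from the alternative laws, flexibility and the Moufang identity of Lemma~\ref{lem:octprop}(1),(2),(7) together with the linearised relations (4),(5). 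Concretely, writing each product through the associator $[a,b,c]=(ab)c-a(bc)$ and using that alternativity makes the associator alternating (a standard consequence of Lemma~\ref{lem:octprop}(1),(2)) reduces every case to associators of elements of the \emph{associative} subalgebra $\SH'$ with $\zeta$, which the orthogonality hypotheses control.

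Once these identities are in hand, the standard basis exhibits $\SO$ as the doubling of $\SH=\Span\{1,i,j,k\}$ by $e$, while the images exhibit $\SO$ as the doubling of $\SH'$ by $\zeta$ in exactly the same pattern; hence $\Phi$ intertwines the two doubling multiplications, so it is an algebra automorphism with the prescribed values, completing the existence proof.
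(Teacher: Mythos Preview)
The paper does not supply its own proof of this theorem: it is stated as a classical result and attributed to \cite[p.~309]{postnikov1986lie} with no argument reproduced. So there is nothing in the paper to compare your proposal against directly.

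That said, your outline is the standard one (and essentially Postnikov's): uniqueness from the fact that $i,j,e$ generate $\SO$ multiplicatively, existence by exhibiting $\SO$ as the Cayley--Dickson doubling of the associative subalgebra $\SH'=\Span\{1,\xi,\eta,\xi\eta\}$ by $\zeta$. Your verification that $\SH'$ is a quaternion algebra and that $\SH'\zeta\perp\SH'$ is correct; the adjunction $\langle ax,y\rangle=\langle x,\bar a y\rangle$ is indeed a consequence of Lemma~\ref{lem:octprop}(6), and hypothesis~(b) together with $\zeta\in S^6$ gives $\zeta\perp\SH'$.

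The only place your write-up is thin is the ``decisive step''. Saying that the alternating associator ``reduces every case to associators of elements of $\SH'$ with $\zeta$, which the orthogonality hypotheses control'' is not yet a proof: such associators, e.g.\ $[\xi,\eta,\zeta]$, are not zero, and one must actually compute them. Concretely, the third identity $(a\zeta)(b\zeta)=-\bar b a$ drops out cleanly from the middle Moufang identity (Lemma~\ref{lem:octprop}(7)) via $(a\zeta)(b\zeta)=(\zeta\bar a)(b\zeta)=\zeta(\bar a b)\zeta=-\overline{\bar a b}=-\bar b a$, using $a\zeta=\zeta\bar a$ and $\zeta x\zeta=-\bar x$ for $x\in\SH'$. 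For $a(b\zeta)=(ba)\zeta$ the cleanest route is bilinearity plus the alternating property of the associator: the cases $a=1$, $b=1$, or $a=b$ are immediate from alternativity, and for distinct imaginary basis elements one uses, for instance, $[\xi,\zeta,\eta]=-[\zeta,\xi,\eta]$ together with the anticommutation of orthogonal imaginaries to obtain $\xi(\eta\zeta)=(\eta\xi)\zeta$. The remaining identity then follows by conjugation. None of this is hard, but your sketch should either carry out one representative case or invoke the left/right Moufang identities explicitly rather than leave it at ``the orthogonality hypotheses control''.
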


Let
\[p\colon \ G_2 \rightarrow S^6,\qquad \Phi \mapsto \Phi i\]
be the evaluation mapping on $i$. From Theorem~\ref{thm:autcorr} it follows that the group $G_2 = \Aut \SO $ acts transitively on~$S^6$, i.e., the mapping~$p$ is surjective. Let us denote by $K$ the stabilizer (isotropy) group of $i$ under the action of~$G_2$. Equivalently,
\[K=\{\Phi\colon \SO \rightarrow \SO \, | \, \Phi i = i \}=p^{-1}(i)\]
is the fiber of $p$ over $i$. Due to the standard theorem \cite[Theorem~9.24]{lee2003introduction} of transitive Lie group actions
\[G_2/K \approx S^6.\]

\begin{Lemma} \label{lem:fibsu3}
 There is a canonical isomorphism $K \cong {\rm SU}(3)$.
\end{Lemma}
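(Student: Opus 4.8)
The plan is to restrict every $\Phi \in K$ to the orthogonal complement of the complex line generated by $i$, and to recognise this restriction as a special unitary transformation of $\SC^3$. First I would record that any $\Phi \in K$ fixes $1$ (being an automorphism) and fixes $i$ (by definition of $K$), hence fixes the plane $\SC := \Span_{\SR}\{1,i\} \subset \SO$ pointwise. Since $\Phi$ is an isometry (by item~6 of Lemma~\ref{lem:octprop} with $a=b=\Phi$ unit, or part~3), it preserves the $6$-dimensional orthogonal complement $W := \SC^{\perp}$, spanned by $j,k,e,f,g,h$. On $\SO$ I introduce the endomorphism $J := L_i$ of left multiplication by $i$. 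Alternativity (item~1 of Lemma~\ref{lem:octprop}) gives $i(iw)=(ii)w=-w$, so $J^2=-\mathrm{id}$, while item~6 of Lemma~\ref{lem:octprop} shows $L_i$ is an isometry; as $L_i$ preserves $\SC$ (indeed $i\cdot 1=i$, $i\cdot i=-1$) it preserves $W$. Thus $(W,J)$ is a complex vector space of complex dimension $3$, identified with $\SC^3$ as in Section~\ref{section3.2}.

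Next I would check that $\Phi|_W$ is complex-linear and unitary. Complex-linearity follows from $\Phi(iw)=\Phi(i)\Phi(w)=i\,\Phi(w)$, i.e.\ $\Phi$ commutes with $J$ on $W$. Combined with preservation of the real inner product, $\Phi|_W$ preserves the Hermitian form $h(w_1,w_2)=\langle w_1,w_2\rangle+i\langle Jw_1,w_2\rangle$, so $\Phi|_W \in {\rm U}(3)$. The assignment $\rho\colon \Phi \mapsto \Phi|_W$ is a group homomorphism $K \to {\rm U}(3)$, since each factor preserves $W$; it is injective because $\SO=\SC\oplus W$ and $\Phi$ is the identity on $\SC$, so $\Phi$ is completely determined by $\Phi|_W$.

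The essential point is to upgrade ${\rm U}(3)$ to ${\rm SU}(3)$, and I expect this determinant $=1$ step to be the main obstacle, because it is exactly here that the octonionic structure (rather than merely the complex-Hermitian one) must be used: the absolute value of the complex determinant is automatically $1$ for a unitary map, so the phase must be pinned down by an extra invariant. I would exhibit a complex volume form on $W$ preserved by every $\Phi \in K$. The multiplication defines the $G_2$-invariant associative $3$-form $\phi(x,y,z)=\langle xy,z\rangle$ on $\SO'$; contracting $\phi$ and its metric dual with the fixed vector $i$ produces a nowhere-vanishing complex $(3,0)$-form $\Omega$ on $(W,J)$. Since $\Phi$ preserves both the product and the metric and fixes $i$, it preserves $\Omega$, and a unitary map preserving a complex volume form has determinant $1$; hence $\Phi|_W \in {\rm SU}(3)$. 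A cleaner but less explicit alternative is to note that $K$ is connected and $\dim K = \dim G_2 - 6 = 8$, so $\rho(K)$ is a connected $8$-dimensional subgroup of the $9$-dimensional group ${\rm U}(3)$, and ${\rm SU}(3)$ is the only such subgroup (its Lie algebra $\mathfrak{su}(3)$ is the unique $8$-dimensional subalgebra of $\mathfrak{u}(3)$, as $\mathfrak{su}(3)$ is perfect).

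Finally I would establish surjectivity and conclude. Using $\dim K = 8 = \dim {\rm SU}(3)$ together with connectedness of $K$ — which follows from the homotopy exact sequence of the fibration $K \to G_2 \to S^6$, since $\pi_1(S^6)=\pi_0(G_2)=0$ — the injective homomorphism $\rho$ has open image in the connected group ${\rm SU}(3)$; being also compact, its image is closed, hence all of ${\rm SU}(3)$. (One can alternatively try to produce preimages directly via Theorem~\ref{thm:autcorr} by prescribing $\eta=\Phi j$ and $\zeta=\Phi e$, but verifying that the resulting automorphism restricts to the prescribed element on all of $W$ is delicate, so I would rely on the dimension–connectedness argument.) Either way, $\rho\colon K \to {\rm SU}(3)$ is a bijective Lie group homomorphism, hence the desired canonical isomorphism $K \cong {\rm SU}(3)$.
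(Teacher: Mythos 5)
Your proof is correct in substance, and it shares its first half with the paper's: both restrict $\Phi \in K$ to the orthogonal complement of $\Span\{1,i\}$, endow it with the complex structure $v \mapsto iv$, and observe that the restriction is complex-linear and unitary, injective because $\Phi$ is the identity on $\Span\{1,i\}$. The two arguments then genuinely diverge. The paper finishes constructively via Theorem~\ref{thm:autcorr}: an element of $K=p^{-1}(i)$ is exactly a pair $(\eta,\zeta)=(\Phi j,\Phi e)$ of Hermitian-orthonormal vectors, these are the first two columns of the matrix, and the third column is forced to be $\Phi(g)=\Phi(je)=\eta\zeta$; existence and uniqueness in Theorem~\ref{thm:autcorr} give bijectivity onto ${\rm SU}(3)$ in one stroke, with $\det=1$ absorbed into the (lightly verified) claim that $\eta\zeta$ realizes the column completing $(\eta,\zeta)$ to a special unitary matrix. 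You instead pin down the determinant by an invariant $(3,0)$-form extracted from $\phi(x,y,z)=\langle xy,z\rangle$ --- conceptually attractive, standard in $G_2$-geometry, and arguably a cleaner explanation of \emph{why} the determinant is $1$ than the paper offers --- and you get surjectivity topologically (injective homomorphism, $\dim K=8=\dim{\rm SU}(3)$, image open and closed in a connected group). What the paper's route buys is self-containedness and the explicit $(\xi,\eta,\zeta)$-parametrization that is reused verbatim in Propositions~\ref{prop:trivn} and~\ref{prop:trivs}; your route buys structural insight at the cost of imported facts.

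Three caveats on those imports. First, you use $\dim G_2=14$ and connectedness of $G_2$; the latter is classical but is most often \emph{deduced} from the fibration with fiber ${\rm SU}(3)$, i.e., from this very lemma, so to be safe you should either cite an independent proof or note that $K$ is connected directly: by Theorem~\ref{thm:autcorr} it is parametrized by pairs $(\eta,\zeta)$ with $\eta\in S^5$ and $\zeta$ in a $3$-sphere, an $S^3$-bundle over $S^5$. Second, your worry that the direct surjectivity argument is ``delicate'' dissolves once your determinant step is in place: given $U\in{\rm SU}(3)$, realize its first two columns as $\eta,\zeta$ (Hermitian orthonormality translates exactly into the orthogonality hypotheses of Theorem~\ref{thm:autcorr}), obtain $\Phi\in K$; since $\rho(\Phi)\in{\rm SU}(3)$ shares its first two columns with $U$ and a special unitary matrix is determined by its first two columns, $\rho(\Phi)=U$ with no further verification on $W$ --- this is in effect the paper's argument. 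Third, your parenthetical that $\mathfrak{su}(3)$ is the unique $8$-dimensional subalgebra of $\mathfrak{u}(3)$ ``as $\mathfrak{su}(3)$ is perfect'' is under-justified: perfectness alone does not exclude codimension-one subalgebras (the Borel subalgebra of $\mathfrak{sl}(2,\SR)$ is a counterexample to that heuristic); the correct reason is that a compact simple Lie algebra admits no codimension-one subalgebra. None of these affects the overall correctness of your proof.
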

\begin{proof}
 The subspace $V=\Span \{1,i\}^\perp$ of the algebra $\SO$ is closed under the multiplication by~$i$ and thus it can be considered as a vector space over the field $\SC$ with basis $j$, $e$, $g$. The Hermitian product in $\SO$ induces in~$V$ a Hermitian product with respect to which the basis~$j$,~$e$,~$g$ is orthogonal. Any automorphism $\Phi\colon \SO \rightarrow \SO$ which leaves the element~$i$ fixed, i.e., which is in the subgroup~$K$, defines an operator $V \rightarrow V$ linear over $\SC$. This operator preserves the Hermitian product, and therefore it is an unitary operator.

 The elements of the group ${\rm SU}(3)$ are $3 \times 3$ matrices of the form $[v_1|v_2|v_3]$ consisting of complex orthogonal column vectors having unit length and where $v_3$ is the element in the subspace $\Span_\SC \{v_1,v_2\}^\perp \approx \SC$ such that the determinant of the matrix is~$1$. One can show that the third column is determined by the first two. For a particular $\Phi \in p^{-1}(i)$, the vectors $\eta=\Phi(j)$ and $\zeta=\Phi(e)$ are perpendicular to $i$ and complex orthogonal to each other. Thus, they can be thought as the first and second column of such a matrix and in this case the third column will be $\eta\zeta=\Phi(j)\Phi(e)=\Phi(je)=\Phi(g)$. Combining this with Theorem~\ref{thm:autcorr} it follows that $K$ coincides with ${\rm SU}(3)$.
\end{proof}

As a consequence, we have that ${\rm SU}(3) \subset G_2$ and $G_2 / {\rm SU}(3) \approx S^6$.

\begin{Corollary} Consider the evaluation mapping $p\colon G_2 \rightarrow S^6$, $\Phi \mapsto \Phi i$ defined above. This makes $G_2$ a locally trivial ${\rm SU}(3)$-bundle over~$S^6$.
\end{Corollary}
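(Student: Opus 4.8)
The plan is to recognize the Corollary as a special case of the general principle that, for a Lie group $G$ with a closed subgroup $H$, the canonical projection $G \to G/H$ is a locally trivial principal $H$-bundle. All the ingredients are already in place: $G_2 = \Aut\SO$ is a compact Lie group (a closed subgroup of $O(7)$), the evaluation map $p$ is surjective because the action is transitive, the fiber $K = p^{-1}(i)$ is a closed subgroup isomorphic to ${\rm SU}(3)$ by Lemma~\ref{lem:fibsu3}, and the orbit map induces a diffeomorphism $G_2/K \approx S^6$. So the task reduces to assembling these facts into the three data of a principal bundle: the structure group action, the identification of $p$ with a quotient projection, and local triviality.

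First I would make the identification of $p$ with a quotient projection precise. The evaluation map $p\colon G_2 \to S^6$ factors through the orbit space $G_2/K$, and the induced map $G_2/K \to S^6$ is exactly the diffeomorphism recorded above; under this identification $p$ becomes the canonical projection $\pi\colon G_2 \to G_2/K$. Next I would specify the principal action: right composition $\Phi \cdot k := \Phi \circ k$ defines a right action of $K \cong {\rm SU}(3)$ on $G_2$ that preserves the fibers of $p$, since $(\Phi \circ k)(i) = \Phi(k(i)) = \Phi(i)$ for every $k \in K$. This action is free, because group multiplication in $G_2$ is free, and its orbits are precisely the fibers of $p$, so $p$ already carries the algebraic structure of a principal $K$-bundle.

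The hard part is establishing local triviality, equivalently the existence of local sections of $\pi$. Because $K$ is a closed Lie subgroup of $G_2$, one obtains a local section $\sigma\colon V \to G_2$ of $\pi$ over a neighborhood $V$ of $i$ --- for instance by exponentiating a linear complement to the Lie algebra of $K$ inside that of $G_2$ and invoking the inverse function theorem. Translating $\sigma$ by elements of $G_2$ then yields local sections over an open cover of $S^6$, and each such section produces a local trivialization $p^{-1}(V) \cong V \times {\rm SU}(3)$ via $\Phi \mapsto \big(p(\Phi),\, \sigma(p(\Phi))^{-1} \circ \Phi\big)$. This is the standard construction for homogeneous spaces and may be cited from the references already used for transitive actions; combined with the previous steps it shows that $p$ is a locally trivial ${\rm SU}(3)$-bundle, as claimed.
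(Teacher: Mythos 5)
Your proposal is correct and follows essentially the same route as the paper, which states the Corollary without separate argument as a direct consequence of Lemma~\ref{lem:fibsu3} and the cited standard theorem on transitive Lie group actions (giving $G_2/K \approx S^6$); your write-up simply fills in that standard homogeneous-space argument --- identifying $p$ with the quotient projection, checking the right action of $K$ by composition is free with orbits equal to the fibers, and producing local sections by exponentiating a complement of the Lie algebra of $K$ and translating by transitivity. The only remark worth adding is that the paper later upgrades this abstract existence statement: Propositions~\ref{prop:trivn} and~\ref{prop:trivs} construct the trivializations explicitly over the two charts $U_1$ and $U_2$ via the translating inner automorphisms $Q_\xi$, but that refinement is the paper's main contribution rather than something the Corollary requires.
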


\subsection{The subgroup of inner automorphisms}\label{section2.3}

In an associative division algebra, such as the quaternions over the reals, the mapping
\[ q_r\colon \ x\mapsto rxr^{-1} \]
is always an automorphism for any invertible element $r$, which is called an inner automorphism. In a non-associative algebra it is not always true that
\[(rx)r^{-1}=r\big(xr^{-1}\big), \qquad \textrm{for all} \quad x,\,r.\]
Moreover, not every invertible element generate an inner automorphism. Still, in the case of the octonions a well defined linear transformation associated with an element $r$ can be defined because of the following lemma.
\begin{Lemma} \label{lem:innautass} For any $r,x \in \SO$
 \[(rx)r^{-1}=r\big(xr^{-1}\big).\]
\end{Lemma}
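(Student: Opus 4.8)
The plan is to prove the identity $(rx)r^{-1} = r(xr^{-1})$ for all $r, x \in \SO$ by reducing it to the flexibility and alternativity properties already established in Lemma~\ref{lem:octprop}. The key observation is that for an octonion, the inverse $r^{-1}$ is not an independent element: since $\SO$ is a normed division algebra, we have $r^{-1} = \bar{r}/|r|^2$. Thus $r^{-1}$ is a real scalar multiple of $\bar{r}$, and because the expression we wish to verify is $\SR$-linear in $r^{-1}$, it suffices to prove the equivalent identity
\[
 (rx)\bar{r} = r(x\bar{r})
\]
for all $r, x \in \SO$. This cleaner formulation removes the inverse entirely and lets us work purely with multiplication and conjugation.

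First I would decompose the conjugate using $\bar{r} = 2\langle r, 1\rangle - r = 2 \operatorname{Re}(r) - r$, where $\operatorname{Re}(r)$ denotes the real part of $r$. Substituting this into both sides, the terms involving the real scalar $2\operatorname{Re}(r)$ are central and associate freely, so they cancel from both sides. What remains to be shown is
\[
 (rx)r = r(xr),
\]
which is precisely the flexibility (elasticity) identity of Lemma~\ref{lem:octprop}, item~2, with the roles of the variables appropriately matched. In other words, after clearing the inverse and splitting off the real part, the entire statement collapses onto flexibility.

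The main step where care is required is the bookkeeping of the real-part reduction: one must confirm that multiplication by a real scalar genuinely associates, i.e., that $(rx)(\lambda) = r(x \lambda) = \lambda(rx)$ for $\lambda \in \SR$, and that conjugation is $\SR$-linear, so that the linear decomposition of $\bar{r}$ is legitimate inside the non-associative product. These facts follow from the construction in Section~\ref{section2.1} (conjugation is $\SR$-linear, and $\SR$ lies in the nucleus of $\SO$), but they should be invoked explicitly to justify rearranging the scalar terms. I do not expect a genuine obstacle here; the substance of the lemma is entirely carried by flexibility, and the role of the proof is to exhibit that the apparent dependence on the inverse and on the conjugate is illusory once one extracts the real part.

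As an alternative route that avoids even the real-part decomposition, I would note that one may instead appeal directly to the Moufang identity of Lemma~\ref{lem:octprop}, item~7, namely $a(bc)a = (ab)(ca)$. Setting $a = r$, $b = x$, and $c = r^{-1}$ there gives $r(x r^{-1}) r = (rx)(r^{-1} r) = rx$, and right-multiplying both sides of the desired identity by $r$ and comparing with $(rx)r^{-1}\cdot r = rx$ shows the two expressions agree after multiplication by $r$; since $\SO$ is a division algebra, right-multiplication by the nonzero element $r$ is injective, and the identity follows. Either derivation is short; the flexibility-based argument is the more transparent, so I would present that one as the main proof.
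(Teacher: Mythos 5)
Your main argument is correct and is essentially the paper's own proof: the paper likewise writes $r^{-1}=\bar{r}/|r|^2=(2r_1-r)/|r|^2$, lets the real scalar term associate freely, and reduces the identity to the flexibility law $(rx)r=r(xr)$ of Lemma~\ref{lem:octprop}(2). Your alternative Moufang route is also viable, though note that the step $\big((rx)r^{-1}\big)r=rx$ silently uses the right inverse property, which itself needs the same decomposition $r^{-1}\in\Span\{1,r\}$ plus right alternativity, so it is no shortcut over the flexibility argument you rightly prefer.
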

\begin{proof}If the coordinates of $r$ in the standard basis are $(r_1,\dots,r_8)$, then $r^{-1}=\frac{\bar{r}}{|r|^2}=\frac{2r_1-r}{|r|^2}$. Therefore, using Lemma~\ref{lem:octprop}(2) we have
 \begin{gather*} (rx)r^{-1} =(rx)\frac{2r_1-r}{|r|^2}=\frac{1}{|r|^2}( (rx)2r_1-(rx)r )= \frac{1}{|r|^2}( r(x2r_1)-r(xr) )= r\big(xr^{-1}\big).\tag*{\qed} \end{gather*}
\renewcommand{\qed}{}
\end{proof}

The following result classifies those elements $r$ for which the linear map $q_r$ is an automorphism of $\SO$. For completeness, we reproduce its original proof.
\begin{Theorem}[\cite{lamont1963arithmetics}] \label{thm:reoctin}
 A non-real octonion $r$ with coordinates $(r_1,\dots,r_8)$ induces an inner automorphism of $\SO$ if and only if $4r_1^2=|r|^2$.
\end{Theorem}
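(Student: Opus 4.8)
The plan is to normalise first and then reduce the automorphism condition to a single scalar equation. Since $q_{\lambda r}=q_r$ for real $\lambda\neq 0$, and both the condition $4r_1^2=|r|^2$ and the property of being an automorphism are scale invariant, I would assume $|r|=1$ and write $r=\cos\theta+\sin\theta\,n$ with $n$ a unit imaginary octonion and $\theta\in(0,\pi)$; then $r_1=\cos\theta$ and the condition $4r_1^2=|r|^2$ becomes $\cos^2\theta=\tfrac14$, equivalently $r^3\in\SR$. The first real step is to compute $q_r$ itself. On the two-dimensional subalgebra $W:=\Span\{1,n\}=\SR[r]$ the map $q_r$ is the identity, since $r$ commutes and associates with its own powers. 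On the orthogonal complement $V:=W^\perp$, using $nx=-xn$ for $x\in V$ together with alternativity (Lemma~\ref{lem:octprop}(1)) and flexibility (Lemma~\ref{lem:octprop}(2)), I would show that $q_r(x)=r^2x$ for all $x\in V$.

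Next I would view $V$ as a copy of $\SC^3$ with complex structure $J:=L_n$ (left multiplication by $n$), exactly as in Lemma~\ref{lem:fibsu3}. Since $r^2\in W$ one checks that $q_r|_V$ is complex linear and equals multiplication by the unit scalar $\mu:=r^2=\cos2\theta+\sin2\theta\,n$. Because $q_r$ is $\SC$-linear and fixes $W$ pointwise, multiplicativity $q_r(xy)=q_r(x)q_r(y)$ is automatic whenever one of $x,y$ lies in $W$, so everything reduces to the case $x,y\in V$. Writing $ab=S(a,b)+T(a,b)$ with $S(a,b)\in W$ and $T(a,b)\in V$, the identity to be verified is $S(a,b)+\mu\,T(a,b)=(\mu a)(\mu b)$ for all $a,b\in V$.

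The crux is to determine how the two components $S$ and $T$ transform under $J$. I would extract these transformation rules from the alternating property of the associator $[x,y,z]=(xy)z-x(yz)$ together with the middle Moufang identity (Lemma~\ref{lem:octprop}(7)): a short computation with $n(ab)n=(na)(bn)$ and the cyclic and alternating symmetries of $[\,\cdot,\cdot,\cdot\,]$ yields $a(nb)=-n(ab)$ and $(na)b=n\,S(a,b)-J\,T(a,b)$, which say precisely that $S$ is sesquilinear and $T$ is conjugate-bilinear with respect to $J$. Granting this, $S(\mu a,\mu b)=|\mu|^2S(a,b)=S(a,b)$ and $T(\mu a,\mu b)=\overline{\mu}^{\,2}T(a,b)$, so the required identity collapses to $\mu\,T(a,b)=\overline{\mu}^{\,2}T(a,b)$ for all $a,b$. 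As $T\not\equiv0$ (for instance $T(j,e)=g$ when $n=i$) and $|\mu|=1$, this holds if and only if $\mu^3=1$, i.e.\ $6\theta\in2\pi\SZ$; for nonreal $r$ this is exactly $\cos^2\theta=\tfrac14$, that is $4r_1^2=|r|^2$. I expect the determination of the transformation behaviour of $S$ and $T$ to be the main obstacle, since it is the only place where the non-associativity of $\SO$ must be handled with care; once those rules are in hand the equivalence drops out as a statement about the single scalar $\mu$.
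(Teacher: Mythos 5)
Your proposal is correct, and it takes a genuinely different route from the paper's proof (which reproduces Lamont's original argument). The paper never diagonalizes $q_r$: it uses the Moufang identity and conjugation tricks to derive the universal relation $\big(rxr^{-1}\big)\big(ryr^{-1}\cdot r^3\big)=r(xy)r^{-1}\cdot r^3$ for all $x,y,r\in\SO$, compares this with the automorphism condition multiplied by $r^3$ to conclude that $r^3$ must be a scalar, and then extracts $4r_1^2=|r|^2$ from the quadratic relation $r^2-2r_1r+|r|^2=0$. You instead split $\SO=W\oplus V$ with $W=\SR[r]$, identify $q_r|_W=\mathrm{id}$ and $q_r|_V=L_{r^2}$, and reduce everything to the $J$-transformation laws of the components $S$, $T$ of the product on $V\cong\SC^3$. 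Your two key identities do hold and are provable exactly as you indicate: $a(nb)=-n(ab)$ follows from $an=-na$ and the alternating associator, while the middle Moufang identity gives $n(ab)n=(na)(bn)$, hence $(na)(nb)=S(a,b)-T(a,b)$ since $nSn=-S$ and $nTn=T$; with real bilinearity these yield sesquilinearity of $S$ and conjugate-bilinearity of $T$, and the condition $\mu=\overline{\mu}^{\,2}$, i.e., $r^6=1$, matches the paper's criterion that $r^3$ be scalar. What each approach buys: the paper's computation is shorter and needs no normalization; yours is structurally richer, exhibiting $q_r$ explicitly as the element of the stabilizer ${\rm SU}(3)$ of $n$ acting on $V$ by the cube root of unity $\mu=r^2$ (tying in with Lemma~\ref{lem:fibsu3}), and proving both implications in one chain of equivalences, whereas the printed proof spells out only necessity (sufficiency follows from its displayed identity once $r^3$ is scalar, after cancelling that nonzero scalar, but is left implicit). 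Two points you should make explicit when writing this up: the mixed case $x\in V$, $y\in W$ of multiplicativity is not purely formal but needs the one-line flexibility check $(\mu x)y=\mu(xy)$; and your witness $T(j,e)=g$ assumes $n=i$, so for general $n$ either invoke transitivity via Theorem~\ref{thm:autcorr} or note directly that $V\cdot V\not\subseteq W$.
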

\begin{proof} From Lemma~\ref{lem:octprop}(7) for $a=r$, $b=xr^{-1}$ and $c=ryr$ it follows that
 \begin{gather} \big(rxr^{-1}\big)(ryr \cdot r)=r\big(xr^{-1} \cdot ryr \big)r. \label{eq:innaut1}\end{gather}
 Similarly,
 \[\overline{ryr}=\bar{r}\bar{y}\bar{r}=\bar{r}\big(\bar{y} \big(x^{-1}x\big)\big)\bar{r}=\bar{r}\big(\big(\bar{y} x^{-1}\big)x\big)\bar{r}=\big(\bar{r} \cdot \bar{y}x^{-1}\big)(x\bar{r})\]
 and therefore
 \begin{align*}
 ryr &= \overline{\big(\bar{r} \cdot \bar{y}x^{-1}\big)(x\bar{r})}=(\overline{x\bar{r}}) \big(\overline{\bar{r}\cdot \bar{y}x^{-1}}\big)=(r\bar{x})\big(\overline{\bar{y}x^{-1}} \cdot r\big) \\
 &=(r\bar{x}) \big(\overline{x}^{-1}y\cdot r\big)= \big(r \big(|x|^2 x^{-1}\big) \big) \left(\frac{x}{|x|^2}y\cdot r\right)=\big(r x^{-1} \big) (xy\cdot r).
 \end{align*}
 Substituting this into \eqref{eq:innaut1} leads us to
 \begin{align*}
 \big(rxr^{-1}\big)\big(ryr \cdot r\big)&=r\big(xr^{-1} \cdot \big(r x^{-1} \big) (xy\cdot r) \big)r =r\big( \big(\underbrace{xr^{-1}}_a \cdot \underbrace{r x^{-1}}_{a^{-1}} \big) \cdot (xy\cdot r) \big)r\\
 &=r((xy\cdot r) )r= r(xy)r^2,
 \end{align*}
 i.e.,
 \begin{gather} \big(rxr^{-1}\big)\big(ryr^{-1} \cdot r^3\big)=r(xy)r^{-1}\cdot r^3 \label{eq:innautc1}\end{gather}
 for all $x,y,r\in\SO$.

 The mapping $q_r\colon x\mapsto rxr^{-1}$ is an automorphism if and only if
 \[ \big(rxr^{-1}\big)\big(ryr^{-1}\big)=r(xy)r^{-1}.\]
 Multiplying this with $r^3$ from the right we get
 \begin{gather} \big(rxr^{-1}\big)\big(ryr^{-1}\big) \cdot r^3=r(xy)r^{-1}\cdot r^3. \label{eq:innautc2}\end{gather}
 Comparing~\eqref{eq:innautc1} with~\eqref{eq:innautc2} we see that in order for~$q_r$ to be an automorphism~$r^{3}$ must be a~scalar.

 Using the fact that $\bar{r}=2r_1-r$, one has $|r|^2=r\bar{r}=r(2r_1-r)=2rr_1-r^2$ for all $r \in \SO$. Multiplying with $r$ and applying the same equation again we get that
 \[ r^3-2r_1r^2+|r|^2r=r^3-4r_1^2 r+2r_1|r|^2+r|r|^2=0, \]
 and thus
 \[ r^3+2r_1|r|^2=r\big(4r_1^2 -|r|^2\big). \]
 Suppose $r^{3}$ is a scalar. Then each term on the left side is real and therefore either $r$ should be real, or $\big(4r_1^2 -|r|^2\big)$ should be zero. The latter case means that $4r_1^2 =|r|^2$.
\end{proof}

\section[$G_2$ as an ${\rm SU}(3)$-bundle over $S^6$]{$\boldsymbol{G_2}$ as an $\boldsymbol{{\rm SU}(3)}$-bundle over $\boldsymbol{S^6}$}\label{section3}

\subsection{The trivialization functions}\label{section3.1}

Our aim is to determine the transition function of the fibration
\[p\colon \ G_2 \rightarrow S^6, \qquad \Phi \mapsto \Phi i\]
between two charts of $S^6$ given by $U_1=S^6 {\setminus} \{S\} $ and $U_2=S^6 {\setminus} \{N\}$, where $S=-i=(0,-1,0,\dots,0)$ and $N=i=(0,1,0,\dots,0)$. The preimage of $i$ is the set $p^{-1}(i)=\{(i,\eta,\zeta)\colon \eta \perp i, \,\zeta \perp \Span \{i,\eta, i \eta \} \}$. As mentioned above this is isomorphic to ${\rm SU}(3)$ and this isomorphism will be called $\theta_i$.

For any $\xi \in S^6$ let us denote by $V_\xi$ or $T_\xi S^6$ the tangent space (of orthogonal vectors) to $\xi$.
By the considerations above elements in $p^{-1}(i)$ can be considered either as orthonormal vector triples in $V_i=T_iS^6$ or as operators that leave the vector $i$ fixed.
It also follows from the result above that there is a complex structure
\[J_i\colon \ V_i \rightarrow V_i, \qquad v \mapsto iv.\]
This is clearly a mapping from $V_i$ to itself such that $J_{i}^2(v)=i^2v=-v$ for all $v \in V$. Thus, there is an isomorphism $\theta_{i}\colon V_i \rightarrow \SC^3$ that assigns to each operator $\Phi \in p^{-1}(i)$, $\Phi\colon V_i \rightarrow V_i$ its matrix representation in the complex basis $\{j,e,g\}$.

Similarly, $p^{-1}(\xi)=\{(\xi,\eta,\zeta)\colon \eta \perp \xi, \,\zeta \perp \Span \{\xi,\eta, \xi \eta \} \}$ for any $\xi \in S^6$. Any map $\varphi \in p^{-1}(\xi)$ carries $V_i$ to $V_{\xi}$. Again, there is a complex structure on $V_\xi$ denoted by $J_\xi$, which comes from octonion multiplication: $J_\xi(v)=\xi v$. By choosing a complex orthonormal basis in this subspace we give an identification $V_\xi \approx \SC^3$. These considerations imply the following classical result.

\begin{Corollary}[{\cite{ehresmann1950varietes}}] The complex structure given by $J_\xi\colon V_\xi \rightarrow V_\xi$, $v \mapsto \xi v$ defines a smooth almost complex structure $J\colon TS^6 \rightarrow TS^6$, $(\xi,v) \mapsto (\xi, J_\xi(v))$.
\end{Corollary}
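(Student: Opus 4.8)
The plan is to verify the three defining properties of an almost complex structure on $TS^6$: that each $J_\xi$ genuinely maps $V_\xi$ into itself, that $J_\xi^2 = -\mathrm{id}$, and that the global assignment $J\colon (\xi,v)\mapsto(\xi,\xi v)$ is smooth. Recall that $S^6$ is the unit sphere in the space $\SO'$ of purely imaginary octonions, so $V_\xi=T_\xi S^6$ consists exactly of the purely imaginary octonions orthogonal to $\xi$.

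First I would check well-definedness, namely that $\xi v\in V_\xi$ whenever $\xi\in S^6$ and $v\in V_\xi$. This amounts to the two orthogonality relations $\langle\xi v,1\rangle=0$ and $\langle\xi v,\xi\rangle=0$, and both follow directly from Lemma~\ref{lem:octprop}(6). Taking $a=\xi$, $b=1$, $x=v$, $y=1$ gives $\langle\xi v,1\rangle+\langle v,\xi\rangle=2\langle\xi,1\rangle\langle v,1\rangle$; the right-hand side vanishes since $\xi$ is purely imaginary, and $\langle v,\xi\rangle=0$ since $v\in V_\xi$, whence $\langle\xi v,1\rangle=0$. Taking instead $a=b=\xi$, $x=v$, $y=1$ gives $2\langle\xi v,\xi\rangle=2|\xi|^2\langle v,1\rangle=0$, because $v$ is purely imaginary. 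Hence $\xi v$ is again a purely imaginary octonion orthogonal to $\xi$, i.e.\ $\xi v\in V_\xi$.

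Next, $J_\xi^2=-\mathrm{id}$ is immediate from alternativity: by Lemma~\ref{lem:octprop}(1) one has $\xi(\xi v)=(\xi\xi)v=\xi^2 v$, and since $\xi$ is a unit imaginary octonion, $\overline{\xi}=-\xi$ and $\xi^2=-\xi\overline{\xi}=-|\xi|^2=-1$, so $J_\xi^2(v)=-v$. Finally, smoothness of $J$ follows because octonion multiplication $(\xi,v)\mapsto\xi v$ is $\SR$-bilinear, hence a polynomial and therefore smooth map on the ambient space $\SO'\times\SO'$. Since $TS^6\subset S^6\times\SO'$ is an embedded submanifold and $J$ is the restriction of this smooth ambient map (which lands back in $TS^6$ by the first step), $J$ is a smooth bundle endomorphism.

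The only real content is the first step; the remaining verifications are formal once the identities of Lemma~\ref{lem:octprop} are in hand. The one subtlety worth flagging is that non-associativity forbids the naive manipulation $\xi(\xi v)=(\xi\xi)v$ in general, so the computation $J_\xi^2=-\mathrm{id}$ must be justified by invoking alternativity rather than associativity; item~(1) of the lemma supplies precisely this.
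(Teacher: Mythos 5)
Your proof is correct, but it takes a genuinely different route from the paper. The paper does not verify the three properties directly: it deduces the corollary from the homogeneity discussion preceding it. There, $J_i^2(v)=i^2v=-v$ is checked only at the single point $i$ (where $V_i=\Span\{1,i\}^\perp$ is visibly closed under multiplication by $i$, as already used in the proof of Lemma~\ref{lem:fibsu3}), and the structure at a general $\xi$ is obtained by transport: any $\varphi\in p^{-1}(\xi)$ is an algebra automorphism preserving the inner product, so $\varphi(V_i)=V_\xi$ and $\varphi(iv)=\xi\,\varphi(v)$, i.e.\ $J_\xi=\varphi\circ J_i\circ\varphi^{-1}$, which is automatically a complex structure on $V_\xi$; smoothness is then read off from the explicit formula $(\xi,v)\mapsto(\xi,\xi v)$, just as in your argument. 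You instead verify everything pointwise from the composition-algebra identities: well-definedness of $J_\xi$ on $V_\xi$ via two clean applications of Lemma~\ref{lem:octprop}(6) (both instantiations check out), and $J_\xi^2=-\mathrm{id}$ via alternativity, Lemma~\ref{lem:octprop}(1), together with $\xi^2=-|\xi|^2=-1$; your flag that non-associativity must be circumvented by alternativity here is exactly the right subtlety to note. What each approach buys: yours is self-contained and makes no appeal to the transitivity of $G_2$ on $S^6$ (Theorem~\ref{thm:autcorr}), so it would survive even without the fibration machinery; the paper's transport argument is shorter, conceptually explains \emph{why} $J$ exists (homogeneity under $\Aut\SO$), and dovetails with the later constructions, where bases of $V_\xi$ are produced by the translating automorphisms $Q_\xi$. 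Both establish smoothness the same way, by restricting the bilinear (hence smooth) ambient multiplication map to the embedded subbundle $TS^6$, which your first step shows it preserves.
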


 This almost complex structure has the following remarkable property.
\begin{Proposition} A rotation $g\colon S^6 \rightarrow S^6$ is an element of~$G_2$ if and only if its pushforward $g_\ast \colon TS^6 \rightarrow TS^6$, $(x,v) \mapsto (g(x),g(v))$ is $J$-equivariant $($where $J$ is considered as a $\SZ_4$-action on~$TS^6)$, or, in other words, if the following diagram is commutative:
 \begin{center}
 \begin{tikzpicture}[description/.style={fill=white,inner sep=2pt}]
 \matrix (m) [matrix of math nodes, row sep=2.5em,
 column sep=2.2em, text height=2ex, text depth=0.25ex]
 { TS^6 & & TS^6 \\
 TS^6& & TS^6. \\ };
 \path[->] 
 (m-1-1) edge node[auto] {$ J$} (m-1-3)
 edge node[auto,swap] {$ g_\ast $} (m-2-1)
 (m-1-3) edge node[auto] {$ g_\ast $} (m-2-3)
 (m-2-1) edge node[auto] {$ J $} (m-2-3);
 \end{tikzpicture}
 \end{center}
\end{Proposition}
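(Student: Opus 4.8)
The plan is to unwind the commutativity of the square into a pointwise algebraic identity and then recognize that identity as the defining property of an automorphism. Recall that a point of $TS^6$ is a pair $(x,v)$ with $x \in S^6$ a unit purely imaginary octonion and $v \in V_x$, i.e.\ $v$ purely imaginary with $v \perp x$; as in Section~\ref{section2.3} one checks that $xv$ is then again purely imaginary and orthogonal to $x$, so that $J(x,v)=(x,xv)$ indeed lands in $TS^6$, while orthogonality of $g$ gives $g(v)\perp g(x)$, so $g_\ast(x,v)=(g(x),g(v))$ is well defined. Chasing the diagram, $g_\ast \circ J$ sends $(x,v)$ to $(g(x),g(xv))$ whereas $J\circ g_\ast$ sends it to $(g(x),g(x)\cdot g(v))$. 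Hence the square commutes if and only if
\[ g(xv)=g(x)\,g(v) \qquad \text{for all } x \in S^6,\ v \in V_x. \]

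For the forward implication, if $g \in G_2 = \Aut\SO$ then $g$ is multiplicative on all of $\SO$ and, fixing $1$ and preserving the norm, it respects both $\SR$ and $\SO'$ and restricts to a rotation of $S^6$; the displayed identity is then immediate.

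For the converse I would start from a rotation $g \in {\rm SO}(7)$, regarded as an orthogonal linear map of $\SO'$ satisfying the identity, and extend it to $\SO=\SR\oplus\SO'$ by declaring $g(1)=1$. The task is to upgrade the identity, which a priori constrains only orthogonal pairs, to full multiplicativity $g(ab)=g(a)g(b)$. First I would fix a unit $x$ and split an arbitrary imaginary vector as $v=\lambda x+v_\perp$ with $v_\perp\perp x$; using $x^2=-1$, $|g(x)|=|x|=1$, and $\SR$-bilinearity one obtains $g(xv)=g(x)g(v)$ for every imaginary $v$, and then by homogeneity in $x$ for all imaginary $a$ and $v$. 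Finally, decomposing general octonions into their real and imaginary parts and invoking $g(1)=1$ together with linearity reduces the identity $g(ab)=g(a)g(b)$ to the purely imaginary case already settled. Since $g$ is then a norm-preserving bijection of $\SO$ fixing $1$ and respecting the product, it is an automorphism, i.e.\ $g \in G_2$.

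The one delicate point is precisely this bootstrapping in the converse: the commutativity hypothesis is phrased solely for tangent (hence orthogonal) vectors, so recovering the ``diagonal'' products $g(x\cdot x)$, and with them the real part of the multiplication, genuinely requires the extra input that $g$ preserves the norm, so that the relation $x^2=-|x|^2$ is transported correctly. Everything else follows routinely from the $\SR$-bilinearity of octonion multiplication.
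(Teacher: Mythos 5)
Your proof is correct and follows essentially the same route as the paper's: both unwind the commuting square to the pointwise identity $g(\xi\eta)=g(\xi)g(\eta)$ on tangent pairs, get the forward direction immediately from $g\in\Aut\SO$, and prove the converse by splitting an imaginary vector into components parallel and perpendicular to $\xi$, handling the parallel part via norm preservation and $\xi^2=-1$. Your write-up is merely more explicit than the paper's about the final bootstrapping from imaginary vectors to full multiplicativity on $\SO$, which the paper leaves implicit.
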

\begin{proof} Because $G_2 \subset O(7)$, any $g \in G_2$ preserves the scalar product. Therefore, $g(V_\xi)=V_{g(\xi)}$ and we need only to prove that the following diagram commutes for all $\xi \in S^6$:
 \begin{center}
 \begin{tikzpicture}[description/.style={fill=white,inner sep=2pt}]
 \matrix (m) [matrix of math nodes, row sep=2.5em,
 column sep=2.2em, text height=2ex, text depth=0.25ex]
 { T_\xi S^6 & & T_\xi S^6 \\
 T_{g(\xi)} S^6& & T_{g(\xi)}S^6. \\ };
 \path[->] 
 (m-1-1) edge node[auto] {$ J_\xi$} (m-1-3)
 edge node[auto,swap] {$ g_\ast $} (m-2-1)
 (m-1-3) edge node[auto] {$ g_\ast $} (m-2-3)
 (m-2-1) edge node[auto] {$ J_{g(\xi)} $} (m-2-3);
 \end{tikzpicture}
 \end{center}
 Since $g\in \Aut \SO$ we have that
 \[ g(J_\xi(\eta))=g(\xi\eta)=g(\xi)g(\eta)=J_{g(\xi)}(g(\eta)),\]
 for all $\xi \in S^6$, $\eta \in V_\xi$.

 Conversely, assume $\xi \in S^6$, $\eta \in \SO'$. Decompose $\eta$ to $\eta_1+\eta_2$ where $\eta_1 \perp \xi$. Suppose $g_\ast$ commutes with $J$. Then
 \[ g(\xi\eta_1)=g(J_\xi(\eta_1))=J_{g(\xi)}(g(\eta_1))=g(\xi)g(\eta_1) ,\]
 and obviously $g(\xi\eta_2)=g(\xi)g(\eta_2)$. Thus, $g(\xi\eta)=g(\xi)g(\eta)$.
\end{proof}

\begin{Proposition} \label{prop:trivn} The trivialization map over $U_1$ is given by
 \[\psi_1\colon \ p^{-1}(U_1) \rightarrow U_1 \times {\rm SU}(3), \qquad \varphi \mapsto (\varphi(i),\theta_{\varphi(i)}(\varphi) ),\]
 where $\varphi(i)$ is the image of $i$ under $\varphi$ and $\theta_{\varphi(i)}(\varphi)$ is given by~\eqref{eq:thetaxi} below.
\end{Proposition}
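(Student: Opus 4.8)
The plan is to realise $\psi_1$ as the trivialization attached to a smooth local section of $p$ over $U_1$, and to read off $\theta_\xi$ from that section. By Theorem~\ref{thm:autcorr} an element $\varphi\in p^{-1}(\xi)$ is the same datum as the adapted frame $(\xi,\varphi(j),\varphi(e))$, with the third leg $\varphi(g)=\varphi(j)\varphi(e)$ forced; and $\theta_\xi(\varphi)$ is by definition the matrix of the $\SC$-linear isometry $\varphi|_{V_i}\colon (V_i,J_i)\to(V_\xi,J_\xi)$ written in the basis $\{j,e,g\}$ of the source and in a complex-orthonormal basis $(f_1(\xi),f_2(\xi),f_3(\xi))$ of the target. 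Hence the whole construction rests on choosing such a target frame \emph{smoothly} over $U_1$, reducing to $\{j,e,g\}$ at $\xi=i$.

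First I would produce the frame explicitly. For $\xi\in U_1=S^6{\setminus}\{S\}$ I would write down, through octonion multiplication, a pair $(f_1(\xi),f_2(\xi))$ of unit vectors with $f_1(\xi)\perp\xi$ and $f_2(\xi)\perp\Span\{\xi,f_1(\xi),\xi f_1(\xi)\}$, depending smoothly on $\xi$ and equal to $(j,e)$ at $\xi=i$, and then set $f_3(\xi)=f_1(\xi)f_2(\xi)$. Conditions~$(a)$ and~$(b)$ of Theorem~\ref{thm:autcorr} are then exactly the orthogonality relations just imposed, so that theorem supplies a unique automorphism $s_1(\xi)\in G_2$ with $s_1(\xi)(i)=\xi$, $s_1(\xi)(j)=f_1(\xi)$ and $s_1(\xi)(e)=f_2(\xi)$, depending smoothly on $\xi$; this is the section. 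With it I would record
\begin{equation}\label{eq:thetaxi}
\theta_\xi(\varphi)=\theta_i\big(s_1(\xi)^{-1}\circ\varphi\big),
\end{equation}
which agrees with the matrix description above because $s_1(\xi)$ carries $\{j,e,g\}$ onto $(f_1(\xi),f_2(\xi),f_3(\xi))$.

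Then I would check the trivialization axioms, which at this stage are formal. Whenever $\varphi(i)=\xi$ the composite $s_1(\xi)^{-1}\circ\varphi$ fixes $i$, hence lies in $K=p^{-1}(i)$; since $\theta_i\colon K\to{\rm SU}(3)$ is the isomorphism of Lemma~\ref{lem:fibsu3}, \eqref{eq:thetaxi} indeed lands in ${\rm SU}(3)$ --- the determinant being $1$ rather than merely of modulus $1$ precisely because the frame satisfies the closure relation $f_3=f_1f_2$, the octonionic ``cross product'' that forces the third column. The map $\psi_1$ is smooth (it is built from $p$, the smooth section $s_1$ and $\theta_i$), satisfies ${\rm pr}_1\circ\psi_1=p$, and is ${\rm SU}(3)$-equivariant because $\theta_i$ is multiplicative; its inverse $\psi_1^{-1}(\xi,A)=s_1(\xi)\circ\theta_i^{-1}(A)$ is visibly smooth. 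Thus $\psi_1$ is a diffeomorphism onto $U_1\times{\rm SU}(3)$ respecting $p$, i.e.\ a bundle trivialization.

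The genuine obstacle is the explicit, globally smooth construction of the frame $(f_1,f_2)$ --- equivalently of $s_1$ and of formula~\eqref{eq:thetaxi} --- on the whole chart. One must exhibit it by octonion multiplication and then verify two things: that it is smooth on all of $U_1$, and that it degenerates \emph{exactly} at the south pole $S=-i$. This last point is what singles out $U_1=S^6{\setminus}\{S\}$ as the correct domain and is the origin of the ``half-angle'' behaviour whose square later yields the degree-two transition function $\theta$ of Theorem~\ref{thm:trans}. The orthogonality and unit-length checks along the way are routine applications of Lemma~\ref{lem:octprop}, in particular the polarisation identity~$(4)$ and the normedness~$(3)$.
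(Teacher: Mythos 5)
Your proposal is correct and is essentially the paper's own argument: your smooth section $s_1$ is exactly the paper's translating automorphism $Q_\xi$ with $Q_\xi(i)=\xi$, and your formula $\theta_\xi(\varphi)=\theta_i\big(s_1(\xi)^{-1}\circ\varphi\big)$ is the coordinate-free form of the inner-product matrix \eqref{eq:thetaxi}, whose columns express $\varphi(j)$, $\varphi(e)$, $\varphi(g)$ in the translated frame $a=Q_\xi(j)$, $b=Q_\xi(e)$, $c=Q_\xi(g)$. The explicit frame construction you defer as ``the genuine obstacle'' is likewise deferred by the paper's proof and carried out immediately after Proposition~\ref{prop:trivs}, where $Q_\xi$ is taken to be conjugation by the unit octonion $r_\xi$ of \eqref{eq:octhopfsol} (reducing to $r_\xi=\tfrac12(1+i)(1+\xi)$ on the equator), so the two arguments match step for step.
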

\begin{proof}In the proof of Lemma~\ref{lem:fibsu3} it was shown, that
 for a particular $\Phi \in p^{-1}(i)$ the vectors $\eta=\Phi(j)$ and $\zeta=\Phi(e)$ are perpendicular to $i$ and complex orthogonal to each other. Thus, they can be thought as the first and second columns of a matrix in ${\rm SU}(3)$ with the third column $\eta\zeta=\Phi(j)\Phi(e)=\Phi(je)=\Phi(g)$. 
 If the coordinates of the vectors are $\eta=(0,y_2,\dots,y_8)$, $\zeta=(0,z_2,\dots,z_8)$ and $\eta \zeta = (0,u_2,\dots,u_8)$, then since $\eta,\zeta,\eta\zeta \in V_i$ we have that $y_2=0$, $z_2=0$ and $u_2=0$. The mapping $\theta_{i}$ is then the following:
 \[\theta_i\colon \ p^{-1}(i) \rightarrow {\rm SU}(3), \qquad (i,\eta,\zeta) \mapsto
 \begin{pmatrix}
 y_3+Iy_4 & z_3+Iz_4 & u_3+Iu_4\\
 y_5+Iy_6 & z_5+Iz_6 & u_5+Iu_6\\
 y_7+Iy_8 & z_7+Iz_8 & u_7+Iu_8
 \end{pmatrix}.\]
 Here $I$ is the imaginary unit in the field $\SC^3$ and not the octonion $i$.
 It follows that
 \[ (i,\eta,\zeta) \mapsto
 \begin{pmatrix}
 \langle \eta,j \rangle + I\langle \eta,k \rangle & \langle \zeta,j \rangle + I\langle \zeta,k \rangle & \langle \eta \zeta,j \rangle + I\langle \eta \zeta,k \rangle\\
 \langle \eta,e \rangle +I\langle \eta,f \rangle & \langle \zeta,e \rangle +I\langle \zeta,f \rangle & \langle \eta \zeta,e \rangle +I\langle \eta \zeta,f \rangle\\
 \langle \eta,g \rangle +I\langle \eta,h \rangle & \langle \zeta,g \rangle +I\langle \zeta,h \rangle & \langle \eta \zeta,g \rangle +I\langle \eta \zeta,h \rangle
 \end{pmatrix},
 \]
 i.e., we represent $\eta,\zeta,\eta\zeta \in V_i$, the images of $j$, $e$ and $g$ in the complex basis $\{j,e,g\}$.

As a consequence, for any $\xi \in U_1 $ and any $\varphi \in p^{-1}(\xi)$, $\varphi$ restricts to a mapping $V_i \rightarrow V_\xi$, which is complex linear, unitary and has determinant~1. We will choose a complex orthonormal basis in $V_\xi$ and write the images of $j$, $e$ and $g$ in this basis. That is, we choose particular identifications $V_i \approx \SC^3$, $V_\xi \approx \SC^3$ and we define $\theta_\xi \colon p^{-1}(\xi) \rightarrow {\rm SU}(3)$ by assigning to each automorphism $\varphi \in p^{-1}(\xi)$ the matrix of the mapping $\varphi\colon \SC^3 \rightarrow \SC^3$. To find a basis in $V_\xi$ we will define a translating automorphism $Q_\xi$ such that $Q_\xi(i)=\xi$. Then, for $a=Q_\xi(j)$, $b=Q_\xi(e)$ and $c=Q_\xi(g)$ the set of vectors $\{a,b,c\}$ is a complex orthonormal basis in $V_\xi$ with respect to the complex structure $J_\xi(v)=\xi v$. Particularly,
 \begin{gather}
\theta_\xi \colon \ p^{-1}(\xi) \rightarrow {\rm SU}(3),\nonumber\\
 (\xi,\eta,\zeta) \mapsto
 \begin{pmatrix}
 \langle \eta,a \rangle + I\langle \eta,J_\xi(a) \rangle & \langle \zeta,a \rangle + I\langle \zeta,J_\xi(a) \rangle & \langle \eta \zeta,a \rangle + I\langle \eta \zeta,J_\xi(a) \rangle\\
 \langle \eta,b \rangle +I\langle \eta,J_\xi(b) \rangle & \langle \zeta,b \rangle +I\langle \zeta,J_\xi(b) \rangle & \langle \eta \zeta,b \rangle +I\langle \eta \zeta,J_\xi(b) \rangle\\
 \langle \eta,c \rangle +I\langle \eta,J_\xi(c) \rangle & \langle \zeta,c \rangle +I\langle \zeta,J_\xi(c) \rangle & \langle \eta \zeta,c \rangle +I\langle \eta \zeta,J_\xi(c) \rangle
 \end{pmatrix}.\label{eq:thetaxi}
\end{gather}
 Using this the trivializing map is given by
 \begin{gather*} \psi_1\colon \ p^{-1}(U_1) \rightarrow U_1 \times {\rm SU}(3), \qquad \varphi \mapsto (\varphi(i),\theta_{\varphi(i)}(\varphi) ) .\tag*{\qed}\end{gather*}
\renewcommand{\qed}{}
\end{proof}

Completely analogously the preimage of $-i$ under the evaluation map $p$ is diffeomorphic to~${\rm SU}(3)$, and in this case the complex structure on $V_{-i}$ is given by $J_{-i}(v)=-iv$. Therefore, $\tilde{\theta}_{-i}$ is defined as
\[ (-i,\eta,\zeta) \mapsto
\begin{pmatrix}
\langle \eta,j \rangle + I\langle \eta,-k \rangle & \langle \zeta,j \rangle + I\langle \zeta,-k \rangle & \langle \eta \zeta,j \rangle + I\langle \eta \zeta,-k \rangle\\
\langle \eta,e \rangle +I\langle \eta,-f \rangle & \langle \zeta,e \rangle +I\langle \zeta,-f \rangle & \langle \eta \zeta,e \rangle +I\langle \eta \zeta,-f \rangle\\
\langle \eta,g \rangle +I\langle \eta,-h \rangle & \langle \zeta,g \rangle +I\langle \zeta,-h \rangle & \langle \eta \zeta,g \rangle +I\langle \eta \zeta,-h \rangle
\end{pmatrix}.
\]

As we did in the previous case, for a general $\xi \in U_2=S^6{\setminus}\{ N \}$ we will choose a translating auto\-morphism $\tilde{Q}_\xi$ with the property that $\tilde{Q}_\xi(-i)=\xi$ implying that $\tilde{Q}_\xi (j),\tilde{Q}_\xi (e),\tilde{Q}_\xi (g) \in V_\xi$ form a complex orthonormal basis. Then we define $\tilde{\theta}_\xi \colon p^{-1}(\xi) \rightarrow {\rm SU}(3)$ by assigning to $\varphi \in p^{-1}(v)$ the matrix of the corresponding linear mapping from $V_{-i}$ onto $V_{\xi}$ written in the bases $\{j,e,g\}$ at $V_{-i}$ and $\big\{\tilde{a},\tilde{b},\tilde{c} \big\} := \big\{\tilde{Q}_\xi (j),\tilde{Q}_\xi (e),\tilde{Q}_\xi (g)\big\}$ at $V_\xi$. Similarly as in the proof Proposition~\ref{prop:trivn} we obtain the following morphism
\begin{gather}
\tilde{\theta}_\xi \colon \ p^{-1}(\xi) \rightarrow {\rm SU}(3), \nonumber\\
(\xi,\eta,\zeta) \mapsto
\begin{pmatrix}
\langle \eta,\tilde{a} \rangle + I\langle \eta,J_\xi(\tilde{a}) \rangle & \langle \zeta,\tilde{a} \rangle + I\langle \zeta,J_\xi(\tilde{a}) \rangle & \langle \eta \zeta,\tilde{a} \rangle + I\langle \eta \zeta,J_\xi(\tilde{a}) \rangle\\
\langle \eta,\tilde{b} \rangle +I\langle \eta,J_\xi(\tilde{b}) \rangle & \langle \zeta,\tilde{b} \rangle +I\langle \zeta,J_\xi(\tilde{b}) \rangle & \langle \eta \zeta,\tilde{b} \rangle +I\langle \eta \zeta,J_\xi(\tilde{b}) \rangle\\
\langle \eta,\tilde{c} \rangle +I\langle \eta,J_\xi(\tilde{c}) \rangle & \langle \zeta,\tilde{c} \rangle +I\langle \zeta,J_\xi(\tilde{c}) \rangle & \langle \eta \zeta,\tilde{c} \rangle +I\langle \eta \zeta,J_\xi(\tilde{c}) \rangle
\end{pmatrix}.\label{eq:thetaxi2}
\end{gather}
As a consequence, the analogue of Proposition~\ref{prop:trivn} is true for this chart.
\begin{Proposition} \label{prop:trivs} The trivialization map over $U_2$ is then given by
 \[\psi_2\colon \ p^{-1}(U_2) \rightarrow U_2 \times {\rm SU}(3), \qquad \varphi \mapsto (\varphi(i),\tilde{\theta}_{\varphi(i)}(\varphi) ),\]
 where $\varphi(i)$ is the image of $i$ under~$\varphi$ and $\tilde{\theta}_{\varphi(i)}(\varphi)$ is given by~\eqref{eq:thetaxi2}.
\end{Proposition}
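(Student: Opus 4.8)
The plan is to carry over the proof of Proposition~\ref{prop:trivn} almost verbatim, replacing the reference point $i$ by $-i$, the translating automorphism $Q_\xi$ by $\tilde{Q}_\xi$, and the reference complex structure $J_i$ by $J_{-i}$, and then to isolate the one place where the construction genuinely differs, namely the sign in $J_{-i}(v)=-iv$. First I would record the reference identification over $-i$: exactly as over $i$, the fibre $p^{-1}(-i)$ is diffeomorphic to ${\rm SU}(3)$, the only change being that the columns are read off in the complex basis $\{j,e,g\}$ of $(V_{-i},J_{-i})$. Since $J_{-i}(j)=-ij=-k$, $J_{-i}(e)=-f$ and $J_{-i}(g)=-h$, the imaginary parts of the entries pick up $-k,-f,-h$ in place of $k,f,h$, which reproduces the displayed formula for $\tilde{\theta}_{-i}$ preceding the statement. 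Equivalently, replacing $J_i$ by $J_{-i}$ conjugates every complex coordinate, so $\tilde{\theta}_{-i}$ is the entrywise complex conjugate of the recipe in~\eqref{eq:thetaxi}.

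Second, for a general $\xi\in U_2$ I would transport this reference by a translating automorphism $\tilde{Q}_\xi\in G_2$ with $\tilde{Q}_\xi(-i)=\xi$, whose existence is guaranteed by Theorem~\ref{thm:autcorr}. Because $\tilde{Q}_\xi$ is an automorphism, $\tilde{Q}_\xi(-iv)=\tilde{Q}_\xi(-i)\tilde{Q}_\xi(v)=\xi\,\tilde{Q}_\xi(v)=J_\xi(\tilde{Q}_\xi(v))$, so $\tilde{Q}_\xi$ intertwines $J_{-i}$ with $J_\xi$ and therefore carries the complex orthonormal basis $\{j,e,g\}$ of $(V_{-i},J_{-i})$ to a complex orthonormal basis $\{\tilde{a},\tilde{b},\tilde{c}\}$ of $(V_\xi,J_\xi)$. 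Expressing the images $\eta=\varphi(j)$, $\zeta=\varphi(e)$ and $\eta\zeta=\varphi(g)$ of an arbitrary $\varphi\in p^{-1}(\xi)$ in this basis, with the $\tilde{a}$-coordinate of $\eta$ equal to $\langle\eta,\tilde{a}\rangle+I\langle\eta,J_\xi(\tilde{a})\rangle$ and so on, produces precisely~\eqref{eq:thetaxi2}. That the columns are Hermitian orthonormal follows, just as in Proposition~\ref{prop:trivn}, from $\varphi\in O(7)$ preserving the real inner product together with the identities in Lemma~\ref{lem:octprop} (skew-adjointness of multiplication by a unit imaginary octonion yields the vanishing of the off-diagonal Hermitian products), while the third column is forced to equal $\eta\zeta=\varphi(j)\varphi(e)=\varphi(je)=\varphi(g)$, exactly the relation that pins it down in Lemma~\ref{lem:fibsu3}.

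The step I expect to require the most care, rather than blind transcription, is the determinant normalisation under the sign flip $J_{-i}=-J_i$. One must confirm that reading coordinates off the \emph{opposite} complex structure at the source still lands the matrix in ${\rm SU}(3)$ and not merely in $U(3)$, i.e.\ that the determinant stays $+1$. Here I would argue that passing from $J_i$ to $J_{-i}$ conjugates the whole matrix entrywise, and since complex conjugation preserves ${\rm SU}(3)$ (a conjugate of a unitary matrix is unitary and $\det\overline{A}=\overline{\det A}$), the determinant-$1$ condition established in Lemma~\ref{lem:fibsu3} is inherited; transport by $\tilde{Q}_\xi\in G_2$ then spreads this over all of $U_2$. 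With $\tilde{\theta}_\xi$ thus shown to be a fibrewise diffeomorphism onto ${\rm SU}(3)$ depending smoothly on $\xi$, the map $\psi_2(\varphi)=(\varphi(i),\tilde{\theta}_{\varphi(i)}(\varphi))$ is the desired local trivialisation over $U_2$, which completes the proof.
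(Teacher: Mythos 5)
Your proposal follows essentially the same route as the paper: the paper also obtains Proposition~\ref{prop:trivs} by repeating the argument of Proposition~\ref{prop:trivn} with $i$ replaced by $-i$, the complex structure $J_{-i}(v)=-iv$ on $V_{-i}$, and a translating automorphism $\tilde{Q}_\xi$ with $\tilde{Q}_\xi(-i)=\xi$ supplying the complex orthonormal basis $\big\{\tilde{a},\tilde{b},\tilde{c}\big\}$ in which the matrix~\eqref{eq:thetaxi2} is read off. Your extra verification that the entrywise conjugation induced by the sign flip $J_{-i}=-J_i$ keeps the matrix in ${\rm SU}(3)$ (since $\det\overline{A}=\overline{\det A}$) is a correct detail that the paper leaves implicit under ``completely analogously''.
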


To summarize, if $Q_\xi ,\tilde{Q}_\xi \in G_2$ are known as functions depending differentiably on $\xi$ with the property that $Q_\xi(i)=\xi$ and $\tilde{Q}_\xi(-i)=\xi$, then an appropriate basis in $V_\xi$ is $a=Q_\xi(j)$, $b=Q_\xi(e)$, $c=Q_\xi(g)$, which are the translations of the basis $j$, $e$, $g$ from~$V_i$ in the case of the first chart. In the case of the second chart~$\tilde{Q}_\xi$ translates $j$, $e$, $g$ from $V_{-i}$ to $V_\xi$. Thus, we need to find elements $Q_\xi \in G_2$ and $\tilde{Q}_\xi \in G_2$. Knowing the first one is enough, because then the second is given due to the identities $Q_{-\xi}(-i)=Q_{-\xi}((-1)i)=Q_{-\xi}(-1)Q_{-\xi}(i)=-1(-\xi)=\xi$.

It will be convenient to look for $Q_\xi$ in the form of an inner automorphism generated by an element $r \in \SO$. The easiest is to look for a unit length octonion that induces $Q_\xi$. For a unit length octonion~$r$ the conjugate of~$i$ with $r$ is
\begin{gather*} ri\bar{r}=\big(0,r_1^2+r_2^2-r_3^2-r_4^2-r_5^2-r_6^2-r_7^2-r_8^2,2(r_2r_3+r_1r_4),2(r_2r_4-r_1r_3),\\
\hphantom{ri\bar{r}=\big(}{} 2(r_2r_5+r_1r_6),2(r_2r_6-r_1r_5),2(r_2r_7-r_1r_8),2(r_1r_7+r_2r_8)\big).\end{gather*}
Since $r_\xi i\bar{r}_\xi=\xi=(0,x_2,\dots,x_8)$ is needed, the following system of equations is to be solved
\begin{gather*}
r_1^2+r_2^2-r_3^2-r_4^2-r_5^2-r_6^2-r_7^2-r_8^2 = x_2,\\
2(r_2r_3+r_1r_4) = x_3, \\
2(r_2r_4-r_1r_3) = x_4, \\
2(r_2r_5+r_1r_6) = x_5, \\
2(r_2r_6-r_1r_5) = x_6, \\
2(r_2r_7-r_1r_8) = x_7, \\
2(r_1r_7+r_2r_8) = x_8.
\end{gather*}
From Theorem~\ref{thm:reoctin} it follows that $r_1=\frac{1}{2}$ is required. The general solution for a fixed $\xi \in U_1$ of this system of equations is
\begin{gather}
r_\xi= \frac{1}{2}\left(1, \sqrt{1+2x_2},\frac{x_3\sqrt{1+2x_2}-x_4}{1+x_2},\frac{x_3+x_4\sqrt{1+2x_2}}{1+x_2}, \right.\nonumber\\
\left.\hphantom{r_\xi= \frac{1}{2}}{}\frac{x_5\sqrt{1+2x_2}-x_6}{1+x_2},\frac{x_5+x_6\sqrt{1+2x_2}}{1+x_2},\frac{x_7\sqrt{1 + 2 x_2} + x_8}{1+x_2},\frac{-x_7 + x_8\sqrt{1 + 2 x_2}}{1+x_2}\right).\label{eq:octhopfsol}
\end{gather}

\subsection{The transition function over the equator}\label{section3.2}
As in the previous sections we cover the base space $S^6$ with two trivializing charts given by $U_1=S^6 {\setminus} \{S\} $ and $U_2=S^6 {\setminus} \{N\}$.
We are interested in the transition function between the two trivializations over the equator. This is enough to reconstruct the whole fibration, since the equator is a deformation retract of the intersection of the charts. The equator $S^5$ will be identified with a submanifold of $\mathbb{C}^3=V_i=V_{-i}$ as
\[ S^5=\big\{(u,v,w) \in \mathbb{C}^3 \,|\, |u|^2+|v|^2+|w|^2=1\big\}, \]
where the coordinate functions $u$, $v$ and $w$ are the duals of $j$, $e$ and $g$ respectively. We are now ready to prove Theorem~\ref{thm:trans} which we restate here.

\begin{Theorem} \label{thm:traneq} The transition function between the two trivializations of the principal ${\rm SU}(3)$-bundle $G_2 \rightarrow S^6$ at the equator is
 \begin{gather*}
 \theta\colon \ S^5 \rightarrow {\rm SU}(3), \qquad
 \begin{pmatrix}
 u\\ v\\ w
 \end{pmatrix}\mapsto
 \begin{pmatrix}
 u^2 & vu +\overline{w} &wu-\overline{v}\\
 uv-\overline{w} & v^2 & wv+\overline{u} \\
 uw+\overline{v}& vw-\overline{u} & w^2
 \end{pmatrix}.
 \end{gather*}
\end{Theorem}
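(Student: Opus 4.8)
The plan is to identify the transition function with a change of unitary frame on the tangent space $V_\xi=T_\xi S^6$ and then to evaluate that frame change explicitly on the equator. Fix $\xi\in U_1\cap U_2$ and write $E_1=j$, $E_2=e$, $E_3=g$. By Propositions~\ref{prop:trivn} and~\ref{prop:trivs}, both $\theta_\xi$ and $\tilde\theta_\xi$ send a given $\varphi\in p^{-1}(\xi)$ to the matrix of complex coordinates of the \emph{same} triple $(\varphi(j),\varphi(e),\varphi(g))$ with respect to the complex structure $J_\xi(v)=\xi v$, the only difference being the chosen complex-orthonormal frame of $(V_\xi,J_\xi)$: chart~$1$ uses $a_r=Q_\xi(E_r)$, while chart~$2$ uses $\tilde a_r=\tilde Q_\xi(E_r)=Q_{-\xi}(E_r)$. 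Since passing from one unitary frame to another acts on the coordinate columns by a single fixed matrix, I obtain $\theta_\xi(\varphi)=S\,\tilde\theta_\xi(\varphi)$, whence $t_{12}(\xi)=S$ with
\[ S_{rs}=\langle \tilde a_s,a_r\rangle+I\langle \tilde a_s,\xi a_r\rangle . \]

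Second, I would rewrite $S$ as the matrix of a single antilinear operator. Because every element of $G_2\subset O(7)$ is an isometry and $Q_\xi(i)=\xi$, I can transport both arguments back to $V_i$: with $R:=Q_\xi^{-1}Q_{-\xi}\in G_2$ and $\xi a_r=Q_\xi(i)Q_\xi(E_r)=Q_\xi(iE_r)$, the two inner products collapse to $\langle R(E_s),E_r\rangle$ and $\langle R(E_s),iE_r\rangle$, so that the $s$-th column of $t_{12}(\xi)$ is exactly the complex-coordinate vector of $R(E_s)$ in the basis $\{j,e,g\}$ of $(V_i,J_i)$. The key structural observation is that $R(i)=Q_\xi^{-1}(Q_{-\xi}(i))=Q_\xi^{-1}(-\xi)=-i$, so $R$ restricts to a $\SC$-\emph{antilinear} isometry of $(V_i,J_i)$; this antilinearity is precisely what produces the conjugates $\overline u,\overline v,\overline w$ in the claimed formula. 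Thus the entire theorem reduces to computing $R(j)$, $R(e)$ and $R(g)$.

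Finally I would carry out that computation. Realizing $R$ as the composite inner automorphism $R(x)=\bar r_\xi\bigl(r_{-\xi}\,x\,\bar r_{-\xi}\bigr)r_\xi$ (well defined by Lemma~\ref{lem:innautass}, and an automorphism by Theorem~\ref{thm:reoctin} since $r_1=\tfrac12$), and specializing~\eqref{eq:octhopfsol} to the equator $\{x_2=0\}$, where $\sqrt{1+2x_2}=1$, gives the simple expressions $r_\xi=c+\rho$ and $r_{-\xi}=c-\rho$ with $c=\tfrac12(1+i)$ and $\rho$ a purely imaginary octonion satisfying $c\bar c=|\rho|^2=\tfrac12$. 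Expanding the four-fold product with the alternative, flexible and Moufang identities of Lemma~\ref{lem:octprop} to tame the non-associativity, evaluating on $j,e,g$, and converting to the complex coordinates $u,v,w$ dual to $j,e,g$ yields the nine entries; as a check they must reassemble into the orthogonal decomposition $zz^t+\overline{M_z}$ noted after Theorem~\ref{thm:trans}. I expect the main obstacle to be exactly this octonion bookkeeping: controlling the signs where octonion multiplication disagrees with the coordinate pairings of~\eqref{eq:thetaxi} (for instance $ig=-h$, whereas the formula pairs $g$ with $+h$), and verifying that the various lower- and higher-order terms in $\rho$ combine, via $c\bar c=|\rho|^2=\tfrac12$, into the homogeneous quadratic entries of the stated matrix.
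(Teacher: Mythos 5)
Your structural reductions are individually correct and in substance reproduce the paper's own strategy: your step~1 says that $t_{12}(\xi)=\theta_\xi\circ\tilde\theta_\xi^{-1}$ is the change-of-frame matrix between $\{Q_\xi j,Q_\xi e,Q_\xi g\}$ and $\{Q_{-\xi}j,Q_{-\xi}e,Q_{-\xi}g\}$, and your step~2, transporting everything back to $V_i$ by the isometric automorphism $Q_\xi$ and observing that the composite sends $i\mapsto -i$, is a cleaner packaging of the paper's $V_i\rightarrow V_{-i}$ bookkeeping (the antilinearity you isolate is exactly what the paper handles by conjugating coordinates at the very end). But the proposal stops precisely where the paper's proof begins in earnest. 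For an explicit-formula theorem the entire content is the closed-form evaluation of the composite automorphism, which the paper achieves in Lemma~\ref{lem:gxiv} and Lemma~\ref{lemma3.7}, namely $Q_{-\xi}^{-1}\circ Q_\xi(v)=v\xi-\langle v\xi,1\rangle(1+\xi)-\langle v\xi,i\rangle(1+\xi)i$, derived through a substantial battery of auxiliary identities. Your plan to ``expand the four-fold product with the alternative, flexible and Moufang identities'' names a toolbox but produces no such formula; note also that the identities doing the real work in the paper are the linearized norm and orthogonality identities of Lemma~\ref{lem:octprop}(4)--(6), not Moufang or flexibility. As written, the proposal establishes none of the nine entries.

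There is moreover a concrete directional discrepancy you would have to resolve before starting that computation. You form $R=Q_\xi^{-1}Q_{-\xi}$, whereas the paper evaluates the inverse composite $Q_{-\xi}^{-1}Q_\xi$, and since the target matrix is neither symmetric nor self-inverse the difference is visible: at $\xi=j$, i.e., $(u,v,w)=(1,0,0)$, a direct computation with $r_{\pm j}=\tfrac12(1+i\pm j\pm k)$ gives $Q_{-j}^{-1}Q_j\colon j\mapsto j$, $e\mapsto -g$, $g\mapsto e$, matching the stated matrix, while your $R$ sends $e\mapsto g$, $g\mapsto -e$. Carried out faithfully, your two (correct) steps therefore yield $M_\xi^t=zz^t-\overline{M_z}$ rather than the claimed $M_\xi=zz^t+\overline{M_z}$; your own proposed consistency check against the decomposition $zz^t+\overline{M_z}$ would flag this sign. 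Whether the cure is to invert the composite or to conjugate the chart-2 coordinates depends on pinning down which complex structure the second trivialization uses on $V_\xi$ --- a point on which the paper itself wavers, since \eqref{eq:thetaxi2} pairs $\eta$ with $+J_\xi(\tilde a)$ while the displayed $\psi_2$ in Section~\ref{section3.2} pairs it with $\tilde Q_\xi k=Q_{-\xi}(i)Q_{-\xi}(j)=-J_\xi(\tilde a)$. For the corollary that $[\theta]$ generates $\pi_5({\rm SU}(3))$ the transpose is immaterial, but for the explicit matrix asserted by the theorem it is not, so this must be settled explicitly.
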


From now on we assume that any $\xi \in \SO$ is in the equator of $S^6$, and thus $x_2=0$. In this case the solution~\eqref{eq:octhopfsol} simplifies to
\[
r_\xi= \frac{1}{2} (1, 1,x_3-x_4,x_3+x_4,x_5-x_6,x_5+x_6,x_7 + x_8,-x_7 + x_8 ).
\]
Due to the fact that $i\xi=(0, 0, -x_4, x_3, -x_6, x_5, x_8, -x_7)$ we have
\[r_\xi=\frac{1}{2}+\frac{i}{2}+\frac{\xi+i\xi}{2}=\frac{(1+i)(1+\xi)}{2}.\]
It is easy to check that $r_\xi$ is really a solution, because in this case due to Lemmas~\ref{lem:octprop}(2) and~\ref{lem:innautass} we may perform the multiplication in arbitrary order:
\begin{gather*}
\frac{(1+i)(1+\xi)}{2}\cdot i \cdot \overline{\frac{(1+i)(1+\xi)}{2}} =\frac{1}{4} (1+i) ((1+\xi)i(1-\xi))(1-i) \\
=\frac{1}{4} (1+i)(i+\xi i-i\xi-\xi i \xi)(1-i) = \frac{1}{4} (1+i)\big(i+2\xi i+ i \xi^2\big)(1-i)=\frac{1}{4} (1+i)2\xi i(1-i) \\
=\frac{1}{4}\big(2\xi i + 2 i\xi i -2\xi i^2 - 2 i\xi i^2\big)=\frac{1}{4}\big(2\xi i -2 i^2 \xi +2\xi +2 i \xi \big)=\frac{1}{4}(2\xi i + 4\xi -2\xi i)=\frac{4\xi}{4}=\xi.
\end{gather*}
Consequently, the required automorphisms for an arbitrary $\xi \in U_1 \cap U_2$ are
\begin{gather*} Q_\xi\colon \ \SO \rightarrow \SO,\qquad x \mapsto r_\xi x\bar{r}_\xi, \\
 \tilde{Q}_\xi\colon \ \SO \rightarrow \SO,\qquad x \mapsto r_{-\xi} x\bar{r}_{-\xi}. \end{gather*}

Once again, the transition function between the two trivializations is
\[\psi_1 \circ \psi_2^{-1}\colon \ U_1 \cap U_2 \times {\rm SU}(3) \rightarrow U_1 \cap U_2\times {\rm SU}(3), \qquad (\xi,\phi) \mapsto \big(\xi, \theta_\xi \circ \tilde{\theta}_\xi^{-1}(\phi) \big).\]
As it was discussed above, the meaning of $\psi_1$ is the following:
\[ (\xi,\eta,\zeta) \mapsto
\begin{pmatrix}
\langle \eta,Q_\xi j \rangle + I\langle \eta, Q_\xi k \rangle & \langle \zeta, Q_\xi j \rangle + I\langle \zeta, Q_\xi k \rangle & \langle \eta \zeta, Q_\xi j \rangle + I\langle \eta \zeta,Q_\xi k \rangle\\
\langle \eta,Q_\xi e \rangle +I\langle \eta,Q_\xi f \rangle & \langle \zeta,Q_\xi e \rangle +I\langle \zeta,Q_\xi f \rangle & \langle \eta \zeta,Q_\xi e \rangle +I\langle \eta \zeta,Q_\xi f \rangle\\
\langle \eta,Q_\xi g \rangle +I\langle \eta,Q_\xi h \rangle & \langle \zeta,Q_\xi g \rangle +I\langle \zeta,Q_\xi h \rangle & \langle \eta \zeta,Q_\xi g \rangle +I\langle \eta \zeta,Q_\xi h \rangle
\end{pmatrix}.
\]
Similarly, $\psi_2$ is
\[ (\xi,\eta,\zeta) \mapsto
\begin{pmatrix}
\langle \eta,\tilde{Q}_\xi j \rangle + I\langle \eta, \tilde{Q}_\xi k \rangle & \langle \zeta, \tilde{Q}_\xi j \rangle + I\langle \zeta, \tilde{Q}_\xi k \rangle & \langle \eta \zeta, \tilde{Q}_\xi j \rangle + I\langle \eta \zeta,\tilde{Q}_\xi k \rangle\\
\langle \eta,\tilde{Q}_\xi e \rangle +I\langle \eta,\tilde{Q}_\xi f \rangle & \langle \zeta,\tilde{Q}_\xi e \rangle +I\langle \zeta,\tilde{Q}_\xi f \rangle & \langle \eta \zeta,\tilde{Q}_\xi e \rangle +I\langle \eta \zeta,\tilde{Q}_\xi f \rangle\\
\langle \eta,\tilde{Q}_\xi g \rangle +I\langle \eta,\tilde{Q}_\xi h \rangle & \langle \zeta,\tilde{Q}_\xi g \rangle +I\langle \zeta,\tilde{Q}_\xi h \rangle & \langle \eta \zeta,\tilde{Q}_\xi g \rangle +I\langle \eta \zeta,\tilde{Q}_\xi h \rangle
\end{pmatrix}.
\]
The mapping $Q_\xi(v)= r_\xi v\bar{r}_\xi$ is linear in $v$, because $\SO$ is distributive and scalars commute with everything. Due to the construction $Q_\xi(x)$ maps the subspace $V_i$ to $V_\xi$ isomorphically.
\begin{Lemma} \label{lem:gxiv} If $v, \xi \in V_i$, then
 \[ Q_\xi(v)= \frac{1}{2}((-1+i+\xi+i\xi) v + \langle v, \xi + i\xi \rangle (1+i+\xi+i\xi)). \]
\end{Lemma}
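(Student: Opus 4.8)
The plan is to substitute the explicit generator $r_\xi = \frac{(1+i)(1+\xi)}{2}$ into $Q_\xi(v) = r_\xi v \bar r_\xi$ and reduce the resulting product using the alternative and flexible laws of Lemma~\ref{lem:octprop}. First I would record the conjugate of the generator: since $\xi \in V_i$ gives $\xi \perp i$ and hence $\xi i = -i\xi$, one has $\overline{i\xi} = \bar\xi\,\bar i = \xi i = -i\xi$, so that on setting $A := i + \xi + i\xi \in \SO'$ we get $2r_\xi = 1 + A$ and $2\bar r_\xi = 1 - A$. By Lemma~\ref{lem:innautass} the product $r_\xi v \bar r_\xi$ is unambiguous (and real scalars factor out), so $Q_\xi(v) = \frac14(1+A)v(1-A)$, which I would expand — using flexibility to write $(Av)A = A(vA) =: AvA$ — into
\[ Q_\xi(v) = \frac14\big(v + Av - vA - AvA\big). \]

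Next I would reduce the three nontrivial terms to linear expressions in $v$ and $A$ with scalar coefficients $|A|^2$ and $\langle A, v\rangle$. Because $A$ and $v$ are purely imaginary, $Av + vA = -2\langle A, v\rangle$, whence $Av - vA = 2Av + 2\langle A, v\rangle$. The crucial step is the conjugation identity
\[ AvA = |A|^2 v - 2\langle A, v\rangle A \qquad (A, v \in \SO'), \]
which I would prove by normalising $A = |A|\hat A$ and verifying $\hat A v\hat A = v - 2\langle v, \hat A\rangle\hat A$ on the orthogonal splitting $v = \langle v, \hat A\rangle\hat A + v_\perp$: the parallel part reduces via $\hat A^3 = -\hat A$, while for the perpendicular part one uses $\hat A v_\perp = -v_\perp\hat A$ together with the right alternative law $(v_\perp\hat A)\hat A = v_\perp\hat A^2 = -v_\perp$ of Lemma~\ref{lem:octprop}(1), giving $\hat A v_\perp\hat A = v_\perp$. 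This is the main obstacle, as it is precisely where non-associativity must be controlled; all the associations invoked are legitimate because they involve only the two elements $\hat A$ and $v_\perp$ and follow from Lemma~\ref{lem:octprop}(1)--(2).

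Finally I would evaluate the two scalars for equatorial $\xi$, i.e.\ $|\xi| = 1$. Using the polarised norm identity Lemma~\ref{lem:octprop}(6) one checks the orthogonalities $\langle i, \xi\rangle = \langle i, i\xi\rangle = \langle\xi, i\xi\rangle = 0$ and $|i\xi|^2 = 1$, so that $|A|^2 = |i|^2 + |\xi|^2 + |i\xi|^2 = 3$; the hypothesis $|\xi| = 1$ enters exactly here. Likewise, since $v \perp i$, one has $\langle A, v\rangle = \langle \xi, v\rangle + \langle i\xi, v\rangle = \langle v, \xi + i\xi\rangle$. Substituting the two reductions and these scalar values into the expansion and collecting terms gives
\[ Q_\xi(v) = \frac12\big((-1 + i + \xi + i\xi)v + \langle v, \xi + i\xi\rangle(1 + i + \xi + i\xi)\big), \]
which is the asserted formula. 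The only genuinely delicate point is the conjugation identity; everything else is bookkeeping with the inner-product relations.
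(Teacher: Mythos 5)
Your proof is correct, and it takes a genuinely different route from the paper's. The paper expands $4Q_\xi(v)=(1+i+\xi+i\xi)v(1-i-\xi-i\xi)$ into all sixteen products and then dispatches the cross terms with five bespoke identities, \eqref{eq:gxi1}--\eqref{eq:gxi5}, each derived separately from Lemma~\ref{lem:octprop}(4); it never isolates a structural reason why the answer depends only on two scalars. You instead compress the generator to $2r_\xi=1+A$ with $A=i+\xi+i\xi$ purely imaginary, and reduce the whole computation to two general facts about pure imaginaries: the anticommutator relation $Av+vA=-2\langle A,v\rangle$ and the conjugation identity $AvA=|A|^2v-2\langle A,v\rangle A$, evaluated at $|A|^2=3$ and $\langle A,v\rangle=\langle v,\xi+i\xi\rangle$. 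Your treatment of non-associativity is sound: flexibility (Lemma~\ref{lem:octprop}(2)) makes $AvA$ unambiguous, and your proof of the conjugation identity on the splitting $v=\langle v,\hat A\rangle\hat A+v_\perp$ uses only anticommutation, $\hat A^3=-\hat A$, and the right alternative law of Lemma~\ref{lem:octprop}(1), exactly where control is needed; note that the paper's identities \eqref{eq:gxi4} and \eqref{eq:gxi5} are precisely the unit-norm special cases of your identity, so your argument subsumes part of the paper's machinery in one reusable statement. What each approach buys: the paper's term-by-term expansion is elementary and self-contained, whereas yours is shorter, makes transparent that the result can only involve $v$, $A$, $|A|^2$ and $\langle A,v\rangle$, and localizes the delicate non-associative step in a single lemma. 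One remark applying to both arguments: the statement of Lemma~\ref{lem:gxiv} only hypothesizes $\xi\in V_i$, but $|\xi|=1$ is genuinely needed (it gives $|A|^2=3$ in your computation, and it is used implicitly in the paper's \eqref{eq:gxi4}); you correctly flag the exact point where this enters, which the paper leaves implicit since $\xi$ lies on the equator in the intended application.
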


\begin{proof} To compute $Q_\xi(v)$, four groups of identities will be necessary.

(i) According to the definition of the scalar product in $\SO$ and Lemma~\ref{lem:octprop}(4)
 \begin{gather*}
 v \cdot i\xi=-\overline{i\xi} \cdot \overline{v}+2\langle v, \overline{i\xi} \rangle = \xi i \cdot v + 2\langle v, \xi i \rangle = -i \xi \cdot v - 2\langle v, i \xi \rangle,\\
 iv\cdot \xi= -i\overline{\xi}\cdot \overline{v}+2 \langle v, \overline{\xi} \rangle i= -i\xi\cdot v - 2 \langle v, \xi \rangle i,\\
 \xi v \cdot i=-\xi \overline{i}\cdot \overline{v}+2 \underbrace{\langle i, \overline{v} \rangle}_0 \xi=i \xi \cdot v.
 \end{gather*}
 Therefore
 \begin{gather}
 i\xi \cdot v-\xi v \cdot i- iv\cdot \xi-v\cdot i\xi=2i\xi\cdot v +2 \langle v,\xi \rangle i+2\langle v,i \xi \rangle. \label{eq:gxi1}
 \end{gather}

(ii) Similarly,
 \begin{gather*}
 iv\cdot i\xi =-(i\cdot \overline{i\xi})\overline{v}+2\langle i\xi,\overline{v}\rangle i=(i\cdot \xi i)v+2\langle \xi i,v \rangle i=\xi v +2\langle \xi i,v \rangle i,\\
 (i\xi\cdot v)i =-(i\xi\cdot\overline{i})\overline{v}+2\underbrace{\langle v, \overline{i} \rangle}_0 i\xi=-(i\xi i)v=(ii\xi)v=-\xi v.
 \end{gather*}
 Summing over the two equations this leads to
 \begin{gather}
 iv\cdot i\xi+(i\xi\cdot v)i=\xi v +2\langle \xi i,v \rangle i-\xi v=2\langle \xi i,v \rangle i. \label{eq:gxi2}
 \end{gather}

(iii) With essentially the same tricks one obtains
 \begin{gather*}
 (i\xi \cdot v)\xi=-(i\xi \cdot \overline{\xi})\overline{v}+2 \langle v,\overline{\xi}\rangle i\xi=iv-2\langle v ,\xi \rangle i\xi,\\
 \xi v\cdot i\xi=-\xi \overline{i\xi}\cdot \overline{v}+2 \langle\overline{v},i\xi\rangle\xi=-(\xi\cdot \overline{x}\overline{i})\overline{v}-2\langle v,i\xi\rangle\xi =-iv-2\langle v,i\xi \rangle\xi.
 \end{gather*}
 Therefore
 \begin{gather}
 (i\xi\cdot v)\xi+\xi v\cdot i\xi=iv-2\langle v ,\xi \rangle i\xi-iv-2\langle v,i\xi \rangle\xi=-2\langle v ,\xi \rangle i\xi-2\langle v,i\xi \rangle\xi. \label{eq:gxi3}
 \end{gather}

(iv) Once again,
 \begin{gather}
 \xi v \xi =-\xi\overline{\xi}\cdot\overline{v}+2\langle\xi,\overline{v}\rangle\xi=v-2\langle\xi,v\rangle\xi, \label{eq:gxi4}\\
 i\xi \cdot v\cdot i\xi =-(i\xi)\overline{i\xi}\cdot\overline{v}+2\langle i\xi,\overline{v}\rangle i\xi=v-2\langle i\xi,v\rangle i\xi. \label{eq:gxi5}
 \end{gather}

 Putting these together,
 \begin{gather*}
 Q_\xi(v) = r_\xi v\bar{r}_\xi=\frac{1}{4}(1+i+\xi+i\xi)v(1-i-\xi-i\xi)\\
 \hphantom{Q_\xi(v)}{} = \frac{1}{4}(v+iv+\xi v+i\xi \cdot v)(1-i-\xi-i\xi)\\
\hphantom{Q_\xi(v)}{}= \frac{1}{4}(v+iv+\xi v+i\xi\cdot v -v i-ivi-\xi v\cdot i-(i\xi\cdot v)i\\
\hphantom{Q_\xi(v)=}{} -v\xi-iv\cdot\xi-\xi v\xi-(i\xi\cdot v)\xi-v\cdot i\xi-iv\cdot i\xi-\xi v\cdot i\xi-i\xi\cdot v\cdot i\xi)\\
\hphantom{Q_\xi(v)}{}= \frac{1}{4}(2iv+\xi v-v\xi+(i\xi \cdot v-\xi v \cdot i- iv\cdot \xi-v\cdot i\xi) \\
\hphantom{Q_\xi(v)=}{} -((i\xi\cdot v)i + i v \cdot i\xi )-((i\xi\cdot v)\xi+\xi v\cdot i\xi)-\xi v \xi - i\xi \cdot v \cdot i\xi)\\
\hphantom{Q_\xi(v)}{}= \frac{1}{4}(2iv-2v+2i\xi\cdot v +\xi v-v\xi +2 \langle v,\xi \rangle i+2\langle v, i\xi \rangle\\
\hphantom{Q_\xi(v)=}{} -2 \langle\xi i,v \rangle i+2 \langle v,\xi \rangle i\xi+2\langle v,i\xi \rangle \xi+ 2\langle \xi,v\rangle \xi+2 \langle i\xi,v\rangle i\xi)\\
\hphantom{Q_\xi(v)}{}= \frac{1}{4}(2iv-2v+2i\xi\cdot v +2\xi v+2 \langle v,\xi \rangle +2 \langle v,\xi \rangle i+2\langle v, i\xi \rangle\\
\hphantom{Q_\xi(v)=}{} -2 \langle\xi i,v \rangle i+2 \langle v,\xi \rangle i\xi+2\langle v,i\xi \rangle \xi+ 2\langle \xi,v\rangle \xi+2 \langle i\xi,v\rangle i\xi)\\
\hphantom{Q_\xi(v)}{}= \frac{1}{2}(iv-v+i\xi\cdot v+\xi v+ (\langle v, \xi \rangle + \langle v, i\xi \rangle )(1+i+\xi+i\xi))\\
\hphantom{Q_\xi(v)}{}= \frac{1}{2}((-1+i+\xi+i\xi) v + \langle v, \xi + i\xi \rangle (1+i+\xi+i\xi)),
 \end{gather*}
 where in the sixth equality the formulas \eqref{eq:gxi1}, \eqref{eq:gxi2}, \eqref{eq:gxi3}, \eqref{eq:gxi4} and \eqref{eq:gxi5} were used, while in seventh equality the rule $v\xi=-\xi v-2 \langle v,\xi \rangle$ was applied.
\end{proof}

Using Lemma~\ref{lem:gxiv} the inverse function $Q_{\xi}^{-1}\colon V_\xi \rightarrow V_i$ can be calculated as well by observing that the roles of $i$ and $\xi$ are played by $-\xi$ and $-i$ respectively. Taking into account that any $v\in V_\xi$ is perpendicular to $\xi$, essentially the same calculation leads to
\begin{align*}
Q_\xi^{-1}(v) &=\bar{r}_\xi v r_\xi=\frac{1}{4}(1-i-\xi-i\xi)v(1+i+\xi+i\xi)\\
&=\frac{1}{4}(1+(-i)+(-\xi)+(-\xi)(-i))v(1-(-i)-(-\xi)-(-\xi)(-i))\\
&=(-1-\xi-i+\xi i) v + (\langle v, -i + \xi i \rangle )(1-\xi-i+\xi i).
\end{align*}
Moreover,
\begin{gather*}
Q_{-\xi}^{-1}(v)=(-1+\xi-i-\xi i) v + (\langle v, -i - \xi i \rangle )(1+\xi-i-\xi i).
\end{gather*}

\begin{Lemma}\label{lemma3.7}
 If $v, \xi \in V_i$, then
 \begin{gather*}
 Q_{-\xi}^{-1}\circ Q_\xi(v)=v \xi- \langle v \xi,1\rangle(1+\xi)- \langle v\xi ,i\rangle(1+\xi)i.
 \end{gather*}
\end{Lemma}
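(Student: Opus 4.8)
The plan is to prove the identity by composing the two closed forms already at hand: the expression for $Q_\xi(v)$ from Lemma~\ref{lem:gxiv} and the formula for $Q_{-\xi}^{-1}$ displayed immediately before this lemma. Setting $w:=Q_\xi(v)\in V_\xi$, I would substitute $w$ into $Q_{-\xi}^{-1}$ and then simplify the resulting octonion expression term by term.

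First I would fix the algebraic environment. Since $v,\xi\in V_i$ and $\xi$ lies in the equator, $i$ and $\xi$ are orthogonal imaginary units, so $\xi i=-i\xi$ by Lemma~\ref{lem:octprop}(4) and $\{1,i,\xi,i\xi\}$ is an orthogonal basis of a quaternion subalgebra $\SH_{i,\xi}\subset\SO$, on which multiplication is associative. Every coefficient occurring in the two closed forms lies in $\SH_{i,\xi}$ and factors conveniently, e.g.\ $-1+i+\xi+i\xi=(1+i)(i+\xi)$ and $-1+\xi-i-\xi i=(1+i)(\xi-1)$, so all products among these coefficients reduce to routine quaternion arithmetic. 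Substituting $w$ then splits $Q_{-\xi}^{-1}(w)$ into a scalar part and a linear part. The scalar factor $\langle w,-i-\xi i\rangle$ I would evaluate using that $Q_\xi$ preserves the inner product together with the standard relation $\langle ab,c\rangle=\langle b,\bar a c\rangle$ (which follows from Lemma~\ref{lem:octprop}), reducing it to a pairing inside $\SH_{i,\xi}$ and obtaining a combination of $\langle v,\xi\rangle$ and $\langle v,i\xi\rangle$. In the linear part $(-1+\xi-i-\xi i)\,w$, every summand whose factors all lie in $\SH_{i,\xi}$ is handled by associative multiplication, leaving a single genuinely octonionic product $(-1+\xi-i-\xi i)\big((-1+i+\xi+i\xi)v\big)$.

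This last triple product is the main obstacle: its two coefficients lie in $\SH_{i,\xi}$, but $v$ has a component transverse to $\SH_{i,\xi}$, so the associator need not vanish and I cannot re-bracket freely. To handle it I would use the Cayley--Dickson structure $\SO=\SH_{i,\xi}\oplus\SH_{i,\xi}e$ and decompose $v=v_1+v_2$ with $v_1\in\SH_{i,\xi}$ and $v_2\in\SH_{i,\xi}e$. On $v_1$ the product associates and collapses to $(-1+\xi-i-\xi i)(-1+i+\xi+i\xi)\,v_1$, computed inside $\SH_{i,\xi}$; on $v_2$ I would push all the quaternionic factors through it by the identities~\eqref{eq:doublingid}, which convert the non-associative product into an ordinary right multiplication inside $\SH_{i,\xi}$ and produce a term proportional to $v_2\xi$.

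Finally I would recombine the $v_1$- and $v_2$-contributions with the scalar part. Reassembling $v_2\xi=(v-v_1)\xi$ and collecting all terms over the basis $\{1,i,\xi,i\xi\}$ should cancel the auxiliary pieces and leave $v\xi$ together with corrections along $1+\xi$ and $(1+\xi)i$. Rewriting the coefficients by $\langle v,\xi\rangle=-\langle v\xi,1\rangle$ and $\langle v,i\xi\rangle=-\langle v\xi,i\rangle$ (from $\langle ab,c\rangle=\langle a,c\bar b\rangle$ and $\bar\xi=-\xi$) then yields exactly $v\xi-\langle v\xi,1\rangle(1+\xi)-\langle v\xi,i\rangle(1+\xi)i$.
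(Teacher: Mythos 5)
Your proposal is correct, and it takes a genuinely different route from the paper's proof. Both arguments start the same way, by composing the closed form of $Q_\xi$ from Lemma~\ref{lem:gxiv} with the displayed formula for $Q_{-\xi}^{-1}$; but the paper then expands $4\,Q_{-\xi}^{-1}\circ Q_\xi(v)$ into all sixteen cross-terms and eliminates them one by one via five ad hoc identities \eqref{eq:gxginvst1}--\eqref{eq:gxginvst5} derived from Lemma~\ref{lem:octprop}(4)--(6), treating each product such as $\xi(i\xi\cdot v)$ or $i\xi\cdot iv$ individually. You instead quarantine the non-associativity: since every coefficient lies in the associative subalgebra $\SH_{i,\xi}=\Span\{1,i,\xi,i\xi\}$ (your factorizations $-1+i+\xi+i\xi=(1+i)(i+\xi)$ and $-1+\xi-i-\xi i=(1+i)(\xi-1)$ both check out), the only genuinely octonionic product is $C(Av)$ with $A=-1+i+\xi+i\xi$ and $C=-1-i+\xi+i\xi$, and your Cayley--Dickson splitting does handle it: on $v_1$ it collapses to $(CA)v_1=-4\,i\xi\cdot v_1$, while on $v_2=ce'$ the rules \eqref{eq:doublingid} give $C(Av_2)=\bigl(c(AC)\bigr)e'=-4(c\xi)e'=4\,v_2\xi$ because $AC=-4\xi$ --- exactly your advertised ``term proportional to $v_2\xi$''. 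The remaining quaternionic bookkeeping closes: with $\alpha=\langle v,\xi\rangle$, $\beta=\langle v,i\xi\rangle$, the scalar part contributes $\langle Av,-i+i\xi\rangle=2\alpha-2\beta$ and $\langle B,-i+i\xi\rangle=0$, and the total reduces to $v_2\xi+\alpha\xi-\beta\,i\xi$, which equals the asserted right-hand side after your conversions $\langle v,\xi\rangle=-\langle v\xi,1\rangle$, $\langle v,i\xi\rangle=-\langle v\xi,i\rangle$. What your route buys is transparency and error-resistance: non-associativity enters through a single application of \eqref{eq:doublingid}, and the structure of the answer --- right multiplication by $\xi$ on $\SH_{i,\xi}^{\perp}$ plus a rank-two correction inside $\SH_{i,\xi}$ --- becomes visible, whereas the paper's computation stays strictly within identities it has already proved. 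The one thing you must make precise: the complementary summand is $\SH_{i,\xi}e'$ for some unit imaginary $e'\perp\SH_{i,\xi}$, not the standard basis octonion $e$ (for generic equatorial $\xi$, $e$ is not orthogonal to $\SH_{i,\xi}$), and the validity of \eqref{eq:doublingid} for this pair needs justification; both points follow from Theorem~\ref{thm:autcorr}, which supplies an automorphism carrying $(i,j,e)$ to $(i,\xi,e')$ and hence transports the standard decomposition $\SO=\SH\oplus\SH e$ to $\SO=\SH_{i,\xi}\oplus\SH_{i,\xi}e'$.
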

\begin{proof}
 To calculate $Q_{-\xi}^{-1}\circ Q_\xi(v)$ for an arbitrary $v \in V_i$ more preparation is needed.

(i) Applying Lemma~\ref{lem:octprop}(5) we obtain
 \begin{gather}
 \xi(i\xi \cdot v)+i\xi \cdot \xi v=\xi i \xi \cdot v + i\xi \xi \cdot v=iv-iv=0. \label{eq:gxginvst1}
 \end{gather}

(ii) By changing the order of terms in the multiplications one obtains
 \begin{gather*} i \cdot \xi v = v\xi \cdot i -2 \langle \xi v,i \rangle = - vi\cdot \xi - 2 \langle\xi v,i \rangle ,\\
 \xi \cdot iv=vi\cdot\xi-2\langle i v,\xi \rangle. \end{gather*}
 Using Lemma~\ref{lem:octprop}(6) and the definition of multiplication it can be proved, that
 \[ 2 \langle \xi v,i \rangle -2 \langle iv,\xi \rangle=4 \langle i\xi,v\rangle. \]
 Therefore,
 \begin{align}
 \xi \cdot iv - i\cdot \xi v&= 2 vi\cdot \xi + 2 \langle \xi v,i \rangle -2 \langle iv,\xi \rangle=2 vi\cdot \xi + 4 \langle i\xi,v\rangle \nonumber\\
 &=-2iv\cdot\xi+ 4 \langle i\xi,v\rangle=2i\xi \cdot v+4\langle v,\xi \rangle i+ 4 \langle i\xi,v\rangle , \label{eq:gxginvst}
 \end{align}
 and thus
 \begin{gather}
 -2 i\xi \cdot v+ \xi \cdot iv-i\cdot \xi v=4 \langle v, \xi \rangle i+4 \langle i\xi,v\rangle. \label{eq:gxginvst2}
 \end{gather}

(iii) By exchanging $\xi$ with $i\xi$ in \eqref{eq:gxginvst} one has
 \begin{gather}
 i\xi \cdot iv-i(i\xi \cdot v)=2 ii\xi v+4 \langle v, i\xi \rangle i+ 4 \langle ii\xi,v \rangle=-2 \xi v +4 \langle v, i\xi \rangle i- 4 \langle\xi,v\rangle .\label{eq:gxginvst3}
 \end{gather}

(iv) If $a,b \in \SO'$ and $a\perp b$, then $ab$ is orthogonal to both $a$ and $b$. Thus
 \begin{gather}
 \langle 1+i+\xi+i\xi ,-i+i\xi\rangle=0-1+0+1=0. \label{eq:gxginvst4}
 \end{gather}

(v) Finally, taking into account again the orthogonality assumptions and Lemma~\ref{lem:octprop}(6)
 \[ \langle i\xi\cdot v,i\rangle= - \langle \xi i\cdot v,i\rangle =\langle iv,\xi i \rangle+ 2\underbrace{ \langle i,\xi i \rangle \langle v,1 \rangle}_{0}=-\langle iv,i\xi \rangle=-\langle v,\xi \rangle.\]
 This leads to
 \begin{gather}
 \langle (-1+i+\xi+i\xi)v, -i+i\xi \rangle \nonumber \\
 \qquad{}= \underbrace{\langle -v, -i+i\xi \rangle}_{\langle -v, i\xi \rangle} + \underbrace{\langle iv,-i+i\xi \rangle}_{\langle iv,i\xi \rangle} + \langle \xi v,-i+i\xi \rangle+ \underbrace{\langle i\xi\cdot v, -i+i\xi \rangle}_{\langle i\xi\cdot v, -i \rangle} \nonumber\\
 \qquad{}= \langle -v, i\xi \rangle+\underbrace{\langle iv,i\xi \rangle}_{\langle v,\xi \rangle}-\underbrace{\langle \xi v,-i \rangle}_{-\langle v,\xi i \rangle}-\underbrace{\langle \xi v,\xi i \rangle}_{\langle v,i \rangle = 0}-(-\langle v,\xi \rangle)=2 \langle v, \xi i \rangle+2\langle v,\xi \rangle.
 \label{eq:gxginvst5}
 \end{gather}

 To simplify calculation it is useful to get rid of the constant factor. According to Lemma~\ref{lem:gxiv} we have
 \begin{gather*}
 4 Q_{-\xi}^{-1}\circ Q_\xi(v) = (-1-i+\xi+i\xi) ((-1+i+\xi+i\xi) v + \langle v, \xi + i\xi \rangle (1+i+\xi+i\xi) ) \\
\hphantom{4 Q_{-\xi}^{-1}\circ Q_\xi(v) =}{} + \langle (-1+i+\xi+i\xi) v + (\langle v, \xi + i\xi \rangle )(1+i+\xi+i\xi), -i - \xi i \rangle \\
\hphantom{4 Q_{-\xi}^{-1}\circ Q_\xi(v) =}{} \times (1-i+\xi+i\xi)\\
\hphantom{4 Q_{-\xi}^{-1}\circ Q_\xi(v)}{} = v-iv-\xi v-i\xi\cdot v+ \langle v, \xi + i\xi \rangle(-1-i-\xi-i\xi)\\
\hphantom{4 Q_{-\xi}^{-1}\circ Q_\xi(v) =}{} +iv-i^2v-i\cdot \xi v-i(i\xi\cdot v)+\langle v, \xi + i\xi \rangle\big({-}i-i^2-i\xi -i^2\xi\big)\\
\hphantom{4 Q_{-\xi}^{-1}\circ Q_\xi(v) =}{} -\xi v+\xi \cdot iv+\xi^2v+\xi (i\xi \cdot v)+\langle v, \xi + i\xi \rangle \big(\xi +\xi i+\xi^2+\xi i\xi\big)\\
\hphantom{4 Q_{-\xi}^{-1}\circ Q_\xi(v) =}{} -i\xi\cdot v+i\xi\cdot iv+i\xi\cdot \xi v+(i\xi)^2 v+ \langle v, \xi + i\xi \rangle \big(i\xi+i\xi i+i\xi^2+(i\xi)^2\big)\\
\hphantom{4 Q_{-\xi}^{-1}\circ Q_\xi(v) =}{} +\big[ \langle (-1+i+\xi+i\xi)v, -i+i\xi \rangle \\
 \hphantom{4 Q_{-\xi}^{-1}\circ Q_\xi(v) =}{} + \langle v, \xi + i\xi \rangle \langle 1+i+\xi+i\xi ,-i+i\xi\rangle \big] (1-i+\xi+i\xi)\\
\hphantom{4 Q_{-\xi}^{-1}\circ Q_\xi(v)}{} = -2\xi v+(-2 i\xi\cdot v+\xi\cdot iv-i\cdot \xi v) \\
\hphantom{4 Q_{-\xi}^{-1}\circ Q_\xi(v) =}{} +(i\xi \cdot iv-i(i\xi \cdot v))+(\xi (i\xi \cdot v)+i\xi\cdot \xi v)\\
\hphantom{4 Q_{-\xi}^{-1}\circ Q_\xi(v) =}{} +2\langle v, \xi + i\xi \rangle(-1-i+\xi+\xi i) \\
\hphantom{4 Q_{-\xi}^{-1}\circ Q_\xi(v) =}{} +\langle (-1+i+\xi+i\xi)v, -i+i\xi \rangle(1+\xi-i+i\xi)\\
\hphantom{4 Q_{-\xi}^{-1}\circ Q_\xi(v)}{} =-4\xi v-4\langle \xi,v\rangle+4\langle v, \xi \rangle i+4\langle i\xi,v \rangle+4 \langle v, i\xi \rangle i\\
\hphantom{4 Q_{-\xi}^{-1}\circ Q_\xi(v) =}{} +2(\langle v, \xi + i\xi \rangle)(-1-i+\xi+\xi i)+2(\langle v, \xi + i\xi \rangle)(1+\xi-i+i\xi )\\
\hphantom{4 Q_{-\xi}^{-1}\circ Q_\xi(v)}{} =-4\xi v+ \langle \xi,v\rangle (-4-2+2+4i-2i-2i+2\xi+2\xi+2\xi i+2i\xi)\\
\hphantom{4 Q_{-\xi}^{-1}\circ Q_\xi(v) =}{} +\langle i\xi,v\rangle(4-2-2+4i-2i+2i+2\xi-2\xi+2\xi i-2 i\xi)\\
\hphantom{4 Q_{-\xi}^{-1}\circ Q_\xi(v)}{} = -4\xi v+ \langle \xi,v\rangle(-4+4\xi)+ \langle i \xi,v\rangle(4i+4\xi i)\\
\hphantom{4 Q_{-\xi}^{-1}\circ Q_\xi(v) =}{} = 4v \xi+ \langle \xi,v\rangle(4+4\xi)+ \langle i \xi,v\rangle(4i+4\xi i),
\end{gather*}
 where in the fourth equality the formulas \eqref{eq:gxginvst1}, \eqref{eq:gxginvst2}, \eqref{eq:gxginvst3}, \eqref{eq:gxginvst4} and \eqref{eq:gxginvst5} were used.
 To sum it up, the required transformation is given by
\begin{align*}
 Q_{-\xi}^{-1}\circ Q_\xi(v)&= v \xi+ \langle \xi,v\rangle(1+\xi)+ \langle i \xi,v\rangle(1+\xi)i\\
 &=v \xi- \langle v \xi,1\rangle(1+\xi)- \langle v\xi ,i\rangle(1+\xi)i.\tag*{\qed}
 \end{align*}\renewcommand{\qed}{}
\end{proof}

\begin{proof}[Proof of Theorem~\ref{thm:traneq}]
 As mentioned earlier, the subspace $V_i$ is a complex linear space with basis $j$, $e$, $g$ and complex structure $J_i\colon V_i \rightarrow V_i$, $v \mapsto iv$. Since $\xi \in V_i$, the coordinate expression of $\xi$ in $V_i$ can be written as
 \begin{align*} \xi = uj+ve+wg&=(u_1+u_2 I)j+(v_1+v_2 I)e+(w_1+w_2 I)g\\
 & = u_1j+u_2k+v_1e+v_2f+w_1g-w_2h,
 \end{align*}
 where $u,v,w \in \SC$, $u_i,v_i,w_i \in \SR$ for $i=1,2$, and $I$ is again the imaginary unit in the field $\SC$.
 Because $V_i=V_{-i}$ as a subspace, $\xi$ can be expressed as a element of $V_{-i}$ as well. Here the basis is the same, but the complex structure is given by $J_{-i}\colon V_{-i} \rightarrow V_{-i}$, $v \mapsto -iv$. Therefore, the coordinate expression of the same $\xi$ here is
 \[ \xi =\overline{u}j+\overline{v}e+\overline{w}g=(u_1-u_2 I)j+(v_1-v_2 I)e+(w_1-w_2 I)g .\]
 According to the multiplication rule of the basis vectors of $\SO$ it is possible to compute the multiplication of $\xi$ with the basis vectors from the left as
 \begin{gather*}
 j\xi = -u_1+u_2i+v_1g+v_2h-w_1e+w_2f=-u\cdot 1+0j-we+vg,\\
 e\xi =-u_1g-u_2h-v_1+v_2i+w_1j-w_2k=-v\cdot 1+wj+0e-ug,\\
 g\xi =u_1e-u_2f-v_1j+v_2k-w_1+w_2i=-w\cdot 1-vj+ue+0g,
 \end{gather*}
 because the resulting vector $v$, of which the terms are calculated here, is in $V_{-i}$. Similarly,
 \begin{align*}
 \xi i &= -u_1k+u_2j-v_1f+v_2e+w_1h+w_2g\\
 &=(u_2+ u_1 I)j+(v_2+v_1 I)e+(w_2+w_1 I)g\\
 &=(\overline{u} I)j+(\overline{v} I)e+(\overline{w}I)g .
 \end{align*}

Using Lemma~\ref{lemma3.7} we get
 \begin{gather*}
 Q_{-\xi}^{-1}\circ Q_\xi(j)=j \xi- \langle j \xi,1\rangle(1+\xi)- \langle j\xi ,i\rangle (1+\xi)i \\
 \qquad{}=\begin{pmatrix}
 0 \\ -w \\ v
 \end{pmatrix} + u_1
 \begin{pmatrix}
 \overline{u} \\ \overline{v} \\ \overline{w}
 \end{pmatrix}
 -u_2
 \begin{pmatrix}
 \overline{u} I \\ \overline{v} I \\ \overline{w}I
 \end{pmatrix}=
 \begin{pmatrix}
 0 \\ -w \\ v
 \end{pmatrix} + \underbrace{(u_1-u_2I)}_{\overline{u}}
 \begin{pmatrix}
 \overline{u} \\ \overline{v} \\ \overline{w}
 \end{pmatrix}=
 \begin{pmatrix}
 \overline{u}^2 \\ \overline{u}\overline{v}-w \\ \overline{u}\overline{w}+v
 \end{pmatrix},\\
 Q_{-\xi}^{-1}\circ Q_\xi(e)=e \xi- \langle e \xi,1\rangle(1+\xi)- \langle e\xi ,i\rangle (1+\xi)i \\
 \qquad{}=\begin{pmatrix}
 w \\ 0 \\ -u
 \end{pmatrix} + v_1
 \begin{pmatrix}
 \overline{u} \\ \overline{v} \\ \overline{w}
 \end{pmatrix}
 - v_2
 \begin{pmatrix}
 \overline{u} I \\ \overline{v} I \\ \overline{w}I
 \end{pmatrix}=
 \begin{pmatrix}
 w \\ 0 \\ -u
 \end{pmatrix}+ \underbrace{(v_1-v_2I)}_{\overline{v}}
 \begin{pmatrix}
 \overline{u} \\ \overline{v} \\ \overline{w}
 \end{pmatrix}=
 \begin{pmatrix}
 \overline{vu} +w\\ \overline{v}^2 \\ \overline{vw}-u
 \end{pmatrix},\\
 Q_{-\xi}^{-1}\circ Q_\xi(g)=g \xi- \langle g \xi,1\rangle(1+\xi)- \langle g\xi ,i\rangle (1+\xi)i \\
 \qquad{}=\begin{pmatrix}
 -v \\ u \\ 0
 \end{pmatrix} + w_1
 \begin{pmatrix}
 \overline{u} \\ \overline{v} \\ \overline{w}
 \end{pmatrix}
 - w_2
 \begin{pmatrix}
 \overline{u} I \\ \overline{v} I \\ \overline{w}I
 \end{pmatrix}=
 \begin{pmatrix}
 -v \\ u \\ 0
 \end{pmatrix}+ \underbrace{(w_1-w_2I)}_{\overline{w}}
 \begin{pmatrix}
 \overline{u} \\ \overline{v} \\ \overline{w}
 \end{pmatrix}=
 \begin{pmatrix}
 \overline{wu} -v\\ \overline{wv}+u \\ \overline{w}^2
 \end{pmatrix}.
 \end{gather*}

 Putting all together, the matrix which represents the mapping $Q_{-\xi}^{-1}\circ Q_\xi\colon V_i \rightarrow V_{-i}$ is
 \[
 M_{\overline{\xi}}=
 \begin{pmatrix}
 \overline{u}^2 & \overline{vu} +w &\overline{wu} -v\\
 \overline{u}\overline{v}-w & \overline{v}^2 & \overline{wv}+u \\
 \overline{u}\overline{w}+v& \overline{vw}-u & \overline{w}^2
 \end{pmatrix},
 \]
 and to get matrix of the same function as a $V_{-i} \rightarrow V_{-i}$ mapping each complex coordinate of $\xi$ should be conjugated:
 \[
 M_\xi=
 \begin{pmatrix}
 u^2 & vu +\overline{w} &wu-\overline{v}\\
 uv-\overline{w} & v^2 & wv+\overline{u} \\
 uw+\overline{v}& vw-\overline{u} & w^2
 \end{pmatrix}.
 \]
 This proves the statement.
\end{proof}

\subsection[The class of $G_2$]{The class of $\boldsymbol{G_2}$}\label{section3.3}

As it is known the principal ${\rm SU}(3)$-bundles over $S^6$ are classified by $\pi_5({\rm SU}(3))$. The following fact is well known, but again we included a sketch proof of it.
\begin{Proposition} $\pi_5({\rm SU}(3)) =\SZ $.
\end{Proposition}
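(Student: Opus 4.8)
The plan is to read off $\pi_5({\rm SU}(3))$ from the long exact homotopy sequence of a single well-chosen fibration. The most economical choice is the sphere bundle ${\rm SU}(2) \hookrightarrow {\rm SU}(3) \xrightarrow{p} S^5$ coming from the transitive action of ${\rm SU}(3)$ on the unit sphere of $\SC^3$, whose fibre (the stabilizer of a point) is ${\rm SU}(2) \cong S^3$. First I would record the relevant low-degree homotopy groups of the two spheres involved, all of which are classical: $\pi_5(S^5) \cong \SZ$, $\pi_4(S^5)=0$, $\pi_6(S^5) \cong \SZ_2$, and $\pi_4(S^3) \cong \pi_5(S^3) \cong \SZ_2$.

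Feeding these into the long exact sequence produces the segment
\[ \pi_6(S^5) \xrightarrow{\partial_2} \pi_5(S^3) \xrightarrow{i_*} \pi_5({\rm SU}(3)) \xrightarrow{p_*} \pi_5(S^5) \xrightarrow{\partial_1} \pi_4(S^3) \to \pi_4({\rm SU}(3)) \to 0, \]
i.e.\ $\SZ_2 \to \SZ_2 \to \pi_5({\rm SU}(3)) \to \SZ \to \SZ_2 \to \pi_4({\rm SU}(3)) \to 0$, so that the entire computation hinges on the two connecting homomorphisms. The logical skeleton is as follows. If $\partial_1$ is surjective, then $\operatorname{im} p_* = \ker\partial_1 = 2\SZ \cong \SZ$, and simultaneously $\pi_4({\rm SU}(3)) = \operatorname{coker}\partial_1 = 0$. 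Since $\operatorname{im} p_*$ is free abelian, the short exact sequence $0 \to \ker p_* \to \pi_5({\rm SU}(3)) \to \SZ \to 0$ splits, giving $\pi_5({\rm SU}(3)) \cong \ker p_* \oplus \SZ$. To pin down the answer as exactly $\SZ$ it then remains to show $\ker p_* = \operatorname{im} i_* = 0$, which holds once $\partial_2$ is surjective onto $\pi_5(S^3) \cong \SZ_2$, since then $\operatorname{im} i_* = \pi_5(S^3)/\operatorname{im}\partial_2 = 0$.

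The main obstacle is precisely establishing these two surjectivity statements: that the characteristic element $\partial_1(\iota_5) \in \pi_4(S^3)$ (the image of a generator $\iota_5$ of $\pi_5(S^5)$) is the nonzero class, and that $\partial_2$ is onto. Neither is formal; indeed surjectivity of $\partial_1$ is equivalent to the assertion $\pi_4({\rm SU}(3)) = 0$, which is itself a genuine input. The cleanest way I know to settle both points at once — and the reason the result is ``well known'' — is homotopy stability: the fibrations ${\rm SU}(n) \to {\rm SU}(n+1) \to S^{2n+1}$ show that $\pi_k({\rm SU}(n)) \to \pi_k({\rm SU}(n+1))$ is an isomorphism for $k \le 2n-1$, so both $\pi_4({\rm SU}(3))$ and $\pi_5({\rm SU}(3))$ lie in the stable range and agree with $\pi_4({\rm SU})$ and $\pi_5({\rm SU})$. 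Bott periodicity then gives $\pi_4({\rm SU}) = 0$ and $\pi_5({\rm SU}) = \SZ$, which confirms $\partial_1$ is surjective and identifies $\pi_5({\rm SU}(3))$ outright as $\SZ$. Staying strictly within the single fibration above, one may instead compute $\partial_1$ and $\partial_2$ directly via the suspension/EHP machinery for the homotopy groups of spheres; this is more self-contained but is genuinely the technical heart of the argument.
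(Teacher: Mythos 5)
Your proof is correct, but it reaches $\SZ$ by a genuinely different route from the paper's. The operative part of your argument is the final one: stability along the fibrations ${\rm SU}(n)\hookrightarrow{\rm SU}(n+1)\to S^{2n+1}$ gives $\pi_5({\rm SU}(3))\cong\pi_5({\rm SU}(4))$ --- note that $k=5=2\cdot 3-1$ sits exactly at the edge of your stated range, but the needed vanishing $\pi_5\big(S^7\big)=\pi_6\big(S^7\big)=0$ makes this step immediate --- and Bott periodicity then yields $\pi_5({\rm SU}(4))=\pi_5({\rm SU})=\SZ$ together with $\pi_4({\rm SU}(3))=\pi_4({\rm SU})=0$. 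The paper starts from the same Bott input $\pi_5({\rm SU}(4))=\SZ$ but descends to ${\rm SU}(3)$ through a chain of low-dimensional identifications instead of the stability fibration: ${\rm SU}(4)={\rm Spin}(6)$, the covering ${\rm Spin}(6)\to{\rm SO}(6)$ (an isomorphism on higher homotopy groups), the fibration of ${\rm SO}(6)$ over $\SC{\rm P}^3={\rm SO}(6)/{\rm U}(3)$ with fibre ${\rm U}(3)$, and finally $\det\colon{\rm U}(3)\to{\rm U}(1)$ with fibre ${\rm SU}(3)$. Your stability argument is shorter and works uniformly for every group in the stable range, while the paper's detour trades three extra fibration/covering arguments for only ever invoking Bott on the comfortably stable $\pi_5({\rm SU}(4))$. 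One structural remark: your lengthy preliminary analysis of ${\rm SU}(2)\to{\rm SU}(3)\to S^5$ is, as you yourself diagnose, incapable of closing the computation on its own (the two connecting homomorphisms are genuine inputs, with surjectivity of $\partial_1$ equivalent to the assertion $\pi_4({\rm SU}(3))=0$), and once stability plus Bott is invoked it becomes logically superfluous, so in a final write-up you could delete it; amusingly, though, it is exactly the fibration the paper exploits in the \emph{next} proposition, where the surjection $\pi_4({\rm SU}(2))\to\pi_4({\rm SU}(3))$ forces $\pi_\ast$ to be multiplication by $2$ and thereby detects that the transition function generates $\pi_5({\rm SU}(3))$.
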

\begin{proof}[Sketch proof] From the well-known periodicity theorem of Bott~\cite{bott1959stable} it follows that $\pi_5({\rm SU}(4))\allowbreak =\SZ$.
 It can be shown as well that ${\rm SU}(4)={\rm Spin}(6)$. By definition ${\rm Spin}(6)$ is the double cover of~${\rm SO}(6)$. A covering mapping induces isomorphisms on the higher homotopy groups of the total and base spaces. Thus, $\pi_5({\rm Spin}(6))=\pi_5({\rm SO}(6))$. Moreover, $\SC{\rm P}^3={\rm SO}(6)/{\rm U}(3)$ and from the long exact sequence of this fibration one obtains $\pi_5({\rm SO}(6))=\pi_5({\rm U}(3))$. Finally the mapping $\det \colon {\rm U}(3) \rightarrow {\rm U}(1)$ is a locally trivial fibration with fibers $\det^{-1}(1)={\rm SU}(3)$. From the long exact sequence of this fibration one obtains $\pi_5({\rm U}(3))=\pi_5({\rm SU}(3))$.
\end{proof}

Our proof of the next statement is an adaptation of \cite[Proposition~2]{chaves96complex}. It provides Theorem~\ref{thm:transfun}.
\begin{Proposition} The map $\theta\colon S^5 \rightarrow {\rm SU}(3)$ from Theorem~{\rm \ref{thm:traneq}} is the generator of $\pi_{5}({\rm SU}(3))$.
\end{Proposition}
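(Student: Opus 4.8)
The plan is to detect the class $[\theta]\in\pi_5({\rm SU}(3))$ through the bundle projection $\rho\colon {\rm SU}(3)\to S^5$ of the fibration ${\rm SU}(2)\hookrightarrow {\rm SU}(3)\xrightarrow{\rho} S^5$, in which ${\rm SU}(2)$ is the stabilizer of a fixed unit vector and $\rho$ sends a matrix to the corresponding column. First I would write out the homotopy long exact sequence of this fibration in the relevant range,
\[ \pi_5({\rm SU}(2))\xrightarrow{i_*}\pi_5({\rm SU}(3))\xrightarrow{\rho_*}\pi_5(S^5)\xrightarrow{\partial}\pi_4({\rm SU}(2))\to\pi_4({\rm SU}(3)). \]
Using the standard values $\pi_5({\rm SU}(2))=\pi_5(S^3)=\SZ/2$, $\pi_4({\rm SU}(2))=\pi_4(S^3)=\SZ/2$ and $\pi_4({\rm SU}(3))=0$, the map $i_*$ vanishes (there is no nonzero homomorphism from a finite group to $\SZ$), so $\rho_*$ is injective; and by exactness at $\pi_4({\rm SU}(2))$ the boundary $\partial$ is surjective onto $\SZ/2$, whence $\ker\partial=2\SZ$. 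Thus $\rho_*$ is injective with image exactly $2\pi_5(S^5)=2\SZ$, and an element $x\in\pi_5({\rm SU}(3))=\SZ$ is a generator if and only if $\rho_*(x)$ has degree $\pm2$ in $\pi_5(S^5)=\SZ$.

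This reduces the statement to the differential-topological claim that $\rho\circ\theta\colon S^5\to S^5$ has degree $\pm2$. By Theorem~\ref{thm:traneq} this composite is the map sending $(u,v,w)$ to the appropriate column of the matrix, e.g.
\[ g\colon \ S^5\to S^5,\qquad (u,v,w)\mapsto\big(u^2,\,uv-\overline{w},\,uw+\overline{v}\big) \]
for the first column. To compute $\deg g$ I would take the regular value $(1,0,0)$ and solve $g(u,v,w)=(1,0,0)$: the equation $|u^2|=1$ together with $|u|^2+|v|^2+|w|^2=1$ forces $v=w=0$ and $u=\pm1$, so $g^{-1}(1,0,0)=\{(1,0,0),(-1,0,0)\}$, and at each point $g$ is a local diffeomorphism. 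The degree is then the sum of the two local degrees $\mathrm{sign}\det(Dg)$.

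The main obstacle is the orientation bookkeeping at these two preimages. Writing $u=u_1+Iu_2$, $v=v_1+Iv_2$, $w=w_1+Iw_2$ and computing $Dg$ in the coordinate frame $\big(\partial_{u_2},\partial_{v_1},\partial_{v_2},\partial_{w_1},\partial_{w_2}\big)$ of $TS^5\subset\SC^3$, I expect the Jacobian determinant to come out $+8$ at $(1,0,0)$ and $-8$ at $(-1,0,0)$. The apparent sign discrepancy is resolved by noting that, with $S^5$ oriented as the boundary of the unit ball (outward normal first), this coordinate frame is positively oriented at $(1,0,0)$ but negatively oriented at $(-1,0,0)$, since there the outward normal is $-\partial_{u_1}$. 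After this correction both local degrees equal $+1$, giving $\deg g=2$; by the symmetry of the formula in Theorem~\ref{thm:traneq} the same value results whichever column realizes $\rho$. Hence $\rho_*[\theta]=2\in\pi_5(S^5)$, and by the first paragraph $[\theta]$ is a generator of $\pi_5({\rm SU}(3))$, which is the desired adaptation of \cite[Proposition~2]{chaves96complex}.
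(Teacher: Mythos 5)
Your proposal is correct and follows essentially the same route as the paper: the column-projection fibration ${\rm SU}(2)\hookrightarrow{\rm SU}(3)\to S^5$, the long exact sequence forcing $\pi_\ast$ to be multiplication by $\pm 2$, and the degree-$2$ computation for the first column of $\theta$ via the two preimages $(\pm 1,0,0)$ of $(1,0,0)$. Your orientation bookkeeping at $(-1,0,0)$ is a correct and welcome elaboration of the step the paper dispatches with ``it can be checked that the corresponding signs are the same.''
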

\begin{proof} The columns of a matrix in ${\rm SU}(3)$ are unit length vectors in $\SC^3$. Define a mapping $\pi\colon {\rm SU}(3) \rightarrow S^5$ as the projection onto the first column. Then the fiber above, e.g., $(1,0,0)$ is ${\rm SU}(2)$ and therefore $\pi\colon {\rm SU}(3) \rightarrow S^5$ is a fibration with fibers ${\rm SU}(2)$. Then the long exact homotopy sequence of this fibration gives
 \[ \underbrace{\pi_5({\rm SU}(3))}_{\SZ} \overset{\pi_\ast}{\longrightarrow} \underbrace{\pi_5\big(S^5\big)}_{\SZ} \longrightarrow \underbrace{\pi_4({\rm SU}(2))}_{\SZ_2} \longrightarrow \underbrace{\pi_4({\rm SU}(3))}_{0}.\]
 Because the mapping $\pi_4({\rm SU}(2)) \rightarrow \pi_4({\rm SU}(3))$ is surjective, the map $\pi_\ast$ should be multiplication by 2. A generator of $\pi_5\big(S^5\big)$ is just a map $S^5 \rightarrow S^5$ of degree one. The degree of $\pi \circ \theta\colon S^5 \rightarrow S^5$ is 2, because this mapping is just the first column of $\theta$. For example, the point $(1,0,0)$ has preimage $\{(1,0,0),(-1,0,0)\}$. It can be checked that the corresponding signs are the same and therefore $\pi_\ast([\theta])=2$. Thus $[\theta]$ is a generator of $\pi_{5}({\rm SU}(3))$.
\end{proof}

\subsection*{Acknowledgements}
The main part of the work was carried out while the author was at the Budapest University of Technology and Economics, Hungary. The author would like to thank to G\'abor Etesi and to Szil\'ard Szab\'o for several helpful comments and discussions. The author is also thankful to the anonymous referees.

\pdfbookmark[1]{References}{ref}
\LastPageEnding


\begin{thebibliography}{99}
\footnotesize\itemsep=0pt

\bibitem{baez2002octonions}
Baez J.C., The octonions, \href{https://doi.org/10.1090/S0273-0979-01-00934-X}{\textit{Bull. Amer. Math. Soc.}} \textbf{39} (2002),
 145--205, \href{https://arxiv.org/abs/math.RA/0105155}{arXiv:math.RA/0105155}.

\bibitem{bott1959stable}
Bott R., The stable homotopy of the classical groups, \href{https://doi.org/10.2307/1970106}{\textit{Ann. of Math.}}
 \textbf{70} (1959), 313--337.

\bibitem{chaves96complex}
Chaves L.M., Rigas A., Complex reflections and polynomial generators of
 homotopy groups, \textit{J.~Lie Theory} \textbf{6} (1996), 19--22.

\bibitem{ehresmann1950varietes}
Ehresmann C., Sur les vari\'{e}t\'{e}s presque complexes, in Proceedings of the
 {I}nternational {C}ongress of {M}athematicians, {C}ambridge, {M}ass., 1950,
 Vol.~2, Amer. Math. Soc., Providence, R.I., 1952, 412--419.

\bibitem{lamont1963arithmetics}
Lamont P.J.C., Arithmetics in {C}ayley's algebra, \href{https://doi.org/10.1017/S2040618500034808}{\textit{Proc. Glasgow Math.
 Assoc.}} \textbf{6} (1963), 99--106.

\bibitem{lee2003introduction}
Lee J.M., Introduction to smooth manifolds, \textit{Graduate Texts in
 Mathematics}, Vol.~218, \href{https://doi.org/10.1007/978-0-387-21752-9}{Springer-Verlag}, New York, 2003.

\bibitem{pirisi2017motivic}
Pirisi R., Talpo M., On the motivic class of the classifying stack of {$G_2$}
 and the spin groups, \href{https://doi.org/10.1093/imrn/rnx208}{\textit{Int. Math. Res. Not.}} \textbf{2019} (2019),
 3265--3298, \href{https://arxiv.org/abs/1702.02649}{arXiv:1702.02649}.

\bibitem{postnikov1986lie}
Postnikov M., Lectures in geometry. Semester V: Lie groups and {L}ie algebras,
 Mir, Moscow, 1986.

\bibitem{puttmann2003presentations}
P\"{u}ttmann T., Rigas A., Presentations of the first homotopy groups of the
 unitary groups, \href{https://doi.org/10.1007/s00014-003-0770-0}{\textit{Comment. Math. Helv.}} \textbf{78} (2003), 648--662,
 \href{https://arxiv.org/abs/math.AT/0301192}{arXiv:math.AT/0301192}.

\end{thebibliography}
\end{document}